\documentclass[12pt]{article}
\usepackage{amsmath, amssymb, amsfonts}
\usepackage{amsthm}
\usepackage{geometry}
\usepackage{tikz}
\usetikzlibrary{positioning,calc,arrows.meta}
\usepackage{cite}
\usepackage[colorlinks=true,linkcolor=blue,citecolor=blue,urlcolor=blue]{hyperref}

\numberwithin{equation}{section}

\newtheorem{theorem}{Theorem}[section]

\newtheorem{proposition}{Proposition}[section]
\newtheorem{corollary}{Corollary}[section]

\theoremstyle{definition}
\newtheorem{definition}{Definition}[section]
\newtheorem{remark}{Remark}[section]

\title{\textbf{Universality of shallow and deep neural networks on non-Euclidean spaces}}

\author{\textsc{Vugar Ismailov}\thanks{The author can be contacted at
\texttt{vugaris@mail.ru} or \texttt{vugaris@gmail.com}.}}

\date{}

\begin{document}
\maketitle

\begin{abstract}
We study shallow and deep neural networks whose inputs range over a general
topological space. The model is built from a prescribed family of continuous
feature maps and reduces to multilayer feedforward networks in the Euclidean
case. We focus on the universal approximation property and establish general
conditions under which such networks are dense in spaces of continuous
vector-valued functions on arbitrary topological spaces and, 
in particular, locally convex spaces. Universality results obtained in the 
arbitrary-width case extend classical approximation theorems to 
non-Euclidean spaces. We also consider the
deep narrow setting, in which the width of each hidden layer is uniformly
bounded while the depth is allowed to grow. We identify conditions under which
such networks retain the universal approximation property. As a concrete example, we
employ Ostrand’s extension of the Kolmogorov superposition theorem to derive an
explicit universality result for products of compact metric spaces, with width
bounds expressed in terms of topological dimension.

\medskip
\noindent\textbf{Keywords:}
Universal approximation property; shallow neural networks; deep neural networks;
deep narrow neural networks; topological spaces; locally convex spaces;
topological embeddings; topological dimension;
Kolmogorov--Ostrand superposition theorem.
\end{abstract}

\medskip
\textbf{2020 MSC:} 68T07, 41A46, 26B40, 46E10, 54F45

\section{Introduction}

Neural networks play a central role in modern machine learning and artificial 
intelligence. Among the many architectures in use, \emph{multilayer
feedforward neural networks} remain a basic and influential model 
in both theory and practice. A broad overview of 
the approximation-theoretic properties of feedforward
neural networks can be found, for example, in \cite{Pinkus,Ism}, while modern
deep learning practice is surveyed in \cite{Good}.

A multilayer feedforward neural network consists of an input layer, an output
layer, and one or more hidden layers arranged sequentially. Information
propagates forward through the network: each neuron forms an affine
combination of the outputs of the previous layer and then applies a fixed
nonlinear \emph{activation function}. This repeated affine--nonlinear
processing enables the network to represent complex nonlinear
mappings.

The approximation theory of neural networks is rooted in the
\emph{universal approximation property}, also called the density
property. In the classical formulation, for suitable activation functions
$\sigma$, the linear span of functions of the form
$\sigma(\mathbf{w}\cdot \mathbf{x}-\theta)$ is dense in $C(K)$ for every
compact set $K\subset\mathbb{R}^d$. A fundamental result in this direction
states that, when the activation function $\sigma$ is assumed to be
continuous, the universal approximation property holds if and only if
$\sigma$ is not a polynomial (see \cite{Pinkus,Leshno}).

Note that the inner product $\mathbf{w}\cdot\mathbf{x}$ appearing in classical
neural networks is a linear continuous functional on $\mathbb{R}^d$.
Conversely, by the Riesz representation theorem, every linear continuous
functional on $\mathbb{R}^d$ can be written in this form
(see \cite[Theorem~13.32]{Roman}). Let $\mathcal{L}(\mathbb{R}^d)$ denote the
space of all linear continuous functionals on $\mathbb{R}^d$.
Under this formulation, a single hidden layer network applies a collection of
functions from $\mathcal{L}(\mathbb{R}^d)$ to the input, adds scalar biases,
and then applies a nonlinearity. In particular, 
the classical universal approximation property can be
rephrased as the density of the function class
\[
\mathcal{M}(\sigma )=\operatorname{span}
\bigl\{
\sigma(f(x)-\theta)
:
f\in\mathcal{L}(\mathbb{R}^d),\ \theta\in\mathbb{R}
\bigr\}
\]
in $C(K)$ for every compact set $K\subset\mathbb{R}^d$.

\medskip
\noindent
\textbf{From Euclidean inputs to general topological inputs.}
The preceding reformulation highlights that, in standard feedforward neural
networks, the only structural use of the Euclidean input space
$\mathbb{R}^d$ lies in the availability of a rich family of scalar-valued
continuous functions, namely, linear functionals, applied to the input.
This observation suggests a natural extension beyond Euclidean spaces.

Let $X$ be an arbitrary topological space. Instead of linear functionals,
we fix a family $\mathcal{A}(X)\subset C(X)$ of continuous real-valued
functions on $X$, which play the role of admissible scalar feature maps
available to the network. In the classical case $X=\mathbb{R}^d$, the choice
$\mathcal{A}(X)=\mathcal{L}(\mathbb{R}^d)$ recovers exactly the standard
architecture, since each feature map $f\in\mathcal{A}(X)$ corresponds to an
inner product $\mathbf{w}\cdot \mathbf{x}$.

\begin{definition}[Basic family]
A set $\mathcal{A}(X)\subset C(X)$ is called a \emph{basic family} (for
feedforward neural networks on $X$) if its elements serve as the admissible
scalar feature maps applied directly to the input in the first affine
transformation of the network.
\end{definition}

We now define shallow topological feedforward neural networks in a form that
corresponds to the standard affine--activation--affine structure of
classical single hidden layer networks.

\begin{definition}[Single hidden layer TFNN with vector output]
\label{def:singleTFNN}
Fix a topological space $X$, a basic family $\mathcal{A}(X)\subset C(X)$, an
activation function $\sigma:\mathbb{R}\to\mathbb{R}$, and an integer $m\ge1$.
A \emph{topological feedforward neural network} (TFNN) with one hidden layer
and $m$ outputs is any function
\[
H:X\to\mathbb{R}^m
\]
of the form
\[
H(x)=A\,\sigma\!\bigl(T(x)\bigr),
\]
where:
\begin{itemize}
\item $T:X\to\mathbb{R}^r$ is a feature map of the form
\[
T(x)=\bigl(w_1 f_1(x)-\theta_1,\dots,w_r f_r(x)-\theta_r\bigr),
\]
with $r\in\mathbb{N}$, $f_i\in\mathcal{A}(X)$, and
$w_i,\theta_i\in\mathbb{R}$;
\item $A\in\mathbb{R}^{m\times r}$ is the linear output-layer weight matrix;
\item the activation function $\sigma$ acts componentwise on vector-valued
arguments.
\end{itemize}
\end{definition}

When $X=\mathbb{R}^d$ and $\mathcal{A}(X)=\mathcal{L}(\mathbb{R}^d)$, the map
$T:\mathbb{R}^d\to\mathbb{R}^r$ is an affine map. In this case,
Definition~\ref{def:singleTFNN} coincides exactly with the classical shallow
feedforward network representation
\[
\Phi(x)=T_1\circ\sigma\circ T_0(x),
\]
where $T_0$ is affine and $T_1$ is linear.

In the case $m=1$, the network in Definition~\ref{def:singleTFNN} can be
written in the classical scalar form
\[
H(x)=\sum_{i=1}^{r} c_i\,\sigma\!\bigl(w_i f_i(x)-\theta_i\bigr),
\]
where $c_i\in\mathbb{R}$ are the output weights.

More generally, for $m\ge1$ the vector-valued network can be written
componentwise as
\[
H_j(x)=\sum_{i=1}^{r} c_{ji}\,
\sigma\!\bigl(w_i f_i(x)-\theta_i\bigr),
\qquad j=1,\dots,m,
\]
where $A=(c_{ji})\in\mathbb{R}^{m\times r}$ is the output weight matrix.

\medskip
\noindent
\textbf{Deep networks on topological spaces.}
Deep feedforward neural networks are obtained by iterating the
affine--activation construction layer by layer. We adopt the same principle
in the topological setting, preserving the standard compositional structure
used in the classical theory.

\begin{definition}[Deep TFNN with vector output]
\label{def:deepTFNN}
Fix a topological space $X$, a basic family $\mathcal{A}(X)\subset C(X)$, an
activation function $\sigma:\mathbb{R}\to\mathbb{R}$, and integers
$l\ge1$, $m\ge1$.

A \emph{deep topological feedforward neural network} (TFNN) of depth $l$ with
$m$ outputs is any function
\[
H:X\to\mathbb{R}^m
\]
of the form
\[
H
=
T_l\circ \sigma \circ T_{l-1}\circ \sigma \circ \cdots \circ \sigma \circ
T_1 \circ \sigma \circ T_0,
\]
where:
\begin{itemize}
\item $T_0:X\to\mathbb{R}^{k_0}$ is a feature map of the form
\[
T_0(x)=\bigl(w_1 f_1(x)-\theta_1,\dots,w_{k_0} f_{k_0}(x)-\theta_{k_0}\bigr),
\]
with $f_i\in\mathcal{A}(X)$ and $w_i,\theta_i\in\mathbb{R}$;

\item for $j=1,\dots,l-1$, $T_j:\mathbb{R}^{k_{j-1}}\to\mathbb{R}^{k_j}$ is an
affine map
\[
T_j(y)=A_j y - b_j,
\qquad
A_j\in\mathbb{R}^{k_j\times k_{j-1}},\;
b_j\in\mathbb{R}^{k_j};
\]

\item $T_l:\mathbb{R}^{k_{l-1}}\to\mathbb{R}^m$ is a linear output map
\[
T_l(y)=A_l y,
\qquad
A_l\in\mathbb{R}^{m\times k_{l-1}};
\]

\item the activation function $\sigma$ acts componentwise on vector-valued
arguments.
\end{itemize}
\end{definition}

This definition agrees with the standard representation of deep
feedforward neural networks as compositions of affine maps in the hidden
layers, componentwise nonlinearities, and a linear output map. In particular,
when $X=\mathbb{R}^d$ and $\mathcal{A}(X)=\mathcal{L}(\mathbb{R}^d)$, deep
TFNNs reduce precisely to classical deep neural networks of the form
\[
\Phi(x)=T_l\circ \sigma \circ T_{l-1} \circ \cdots \circ \sigma \circ T_0(x),
\]
with each $T_j$ affine for $j=0,\dots,l-1$ and $T_l$ linear.

\medskip

In this paper we are especially interested in \emph{deep narrow} (or
\emph{deep skinny}) TFNNs, meaning deep TFNNs for which there exists a fixed
integer $k\in\mathbb{N}$ such that the width of every hidden layer is at most
$k$, while the depth is allowed to grow arbitrarily. In other words, in the
deep narrow setting we assume that
\[
\sup_{j=0,\dots,l-1} k_j \le k,
\]
where the bound $k$ is independent of the depth $l$.

\medskip
\noindent
\textbf{Aim of the paper.}
The aim of this paper is to develop a universality theory for TFNNs with vector-valued 
outputs whose inputs lie in an arbitrary topological space $X$. More precisely, we seek conditions on the
feature family $\mathcal{A}(X)\subset C(X)$ and on the activation function
$\sigma$ under which, for every compact set $K\subset X$, TFNNs are dense
in the space $C(K;\mathbb{R}^m)$ of continuous $\mathbb{R}^m$-valued functions
on $K$.

We also focus on the deep narrow setting, in which the
width of each hidden layer is uniformly bounded while the depth is allowed to
be arbitrary. Our goal is to identify structural assumptions ensuring that
such bounded-width deep TFNNs retain the universal approximation property. In
this way, we extend the deep narrow universality property known for Euclidean
input spaces to a broad class of non-Euclidean spaces.

\medskip
\noindent
\textbf{Related literature.}
The approximation theory of neural networks is centered around the
universal approximation property (UAP), also referred to as density.
In the classical Euclidean setting, this property asserts that, for suitable
activation functions $\sigma$, neural networks with a single hidden layer
are dense in $C(K)$ for every compact set $K\subset\mathbb{R}^d$. The most
general characterization of this phenomenon was obtained by Leshno, Lin,
Pinkus, and Schocken \cite{Leshno}, who proved that, under the assumption of
continuity, an activation function $\sigma$ has the UAP if and only if it is
not a polynomial. This result demonstrates the expressive power of shallow
feedforward networks for a broad class of activation functions. Note that the theorem in \cite{Leshno} applies to a wider class of
activation functions, including some that are discontinuous on sets of
Lebesgue measure zero. Detailed proofs and further discussion
can be found in \cite{Pet,Pinkus}.

Early work on universal approximation emphasized that achieving density
typically requires networks with an unbounded number of hidden neurons 
(see, e.g., \cite[Chapter~6.4.1]{Good}). In these classical results, width was
not constrained. Some studies \cite{GI2,GI3,Ism} have shown that, for
certain non-explicit but computable activation functions, universal
approximation is possible even with a fixed and minimal number of hidden
neurons. These results highlight that width is not always the primary source
of expressive power.

Universality has also been investigated for deep architectures under explicit
assumptions on the activation function. An early contribution in this
direction is due to Gripenberg \cite{Grip}, who studied deep
feedforward networks with a uniformly bounded number of neurons per hidden
layer. He established universal approximation under the assumption that the
activation function is continuous, nonaffine, and twice continuously
differentiable in a neighborhood of some point $t$ with $\sigma''(t)\neq 0$.
Subsequently, Kidger and Lyons \cite{Kid} relaxed the required
smoothness. They showed that universality for deep and narrow networks still 
holds if the activation function is continuous, nonaffine, and differentiable 
in a neighborhood of at least one point, with the derivative being continuous 
and nonzero at that point. In both works, the universality property is derived
from local differentiability assumptions, without imposing any global
smoothness conditions on the activation function.

In contrast, Johnson \cite{John} obtained a negative result for a broad
class of activation functions. He proved that if the activation function is
uniformly continuous and can be approximated by a sequence of one-to-one
functions, then deep feedforward networks with width at most $d$ in each
hidden layer fail to be universal on $\mathbb{R}^d$, regardless of depth.

For a comprehensive and modern treatment of the mathematical analysis of deep
neural networks, we refer the reader to the recent monograph by Petersen and
Zech \cite{Pet}. See also \cite{Ismailov} for a focused discussion of 
how UAP statements are sometimes misinterpreted in the literature, 
and for recent results on universality under fixed architectural size, 
in particular fixed width and fixed depth.

The UAP for neural networks acting between infinite-dimensional spaces has
also been extensively studied. In \cite{Sun}, the fundamentality of ridge
functions in Banach spaces was established and applied to shallow networks
with sigmoidal activation functions (see also \cite{Light}). Here, a subset
$S$ of a topological vector space $E$ is called \emph{fundamental} if the
linear span of $S$ is dense in $E$. In \cite{Chen2}, 
it was shown that any continuous nonlinear operator
$G:V\to C(K_2)$, defined on a compact set
$V\subset C(K_1)$, can be uniformly approximated on
$V\times K_2$ by shallow feedforward neural networks 
in the following sense: for each $y\in K_2$, the scalar-valued map
$u\mapsto G(u)(y)$ admits uniform approximation by 
expressions depending on finitely many point evaluations of $u$.
In \cite{Lu}, this approach was extended to deep neural networks and led to the
introduction of \emph{Deep Operator Networks} (DeepONets), which are designed
to approximate nonlinear continuous operators acting between spaces of
continuous functions. In this setting, both the inputs and outputs of the
operator are functions, and neural networks are used to learn the mapping
$u \mapsto G(u)$. In \cite{Lant}, DeepONets were further analyzed within an
encoder--decoder framework, and their approximation properties were studied
for operators whose inputs lie in Hilbert spaces.

Neural networks with inputs from special topological vector spaces have
also been considered. The universal approximation property for networks with
inputs from Fr\'echet spaces and outputs in Banach spaces was established in
\cite{Gal1}. This framework was further extended in \cite{Gal2}, where
universal approximation theorems were proved for quasi-Polish input and
output spaces. Universal approximation results for functional input neural
networks defined on weighted spaces, with outputs in Banach spaces, were
obtained in \cite{Cuch}.

An alternative approach to neural network approximation over non-Euclidean
spaces relies on certain transfer principles, through which approximation
properties established on Euclidean spaces extend 
to more general domains. For example, \cite{Krat}
investigated which modifications of the input and output layers preserve the
universal approximation property. The authors showed that, under suitable
conditions on topological spaces $X$ and $Y$ and continuous maps
$\phi : X \to \mathbb{R}^n$ and $\rho : \mathbb{R}^m \to Y$, density of a class
$\mathcal{F}\subset C(\mathbb{R}^n,\mathbb{R}^m)$ implies density of the
composed class $\{\rho \circ f \circ \phi : f\in\mathcal{F}\}$ in $C(X,Y)$ with
respect to uniform convergence on compact sets.

In \cite{Krat2}, universal approximation results are established for
neural operators (NOs) and mixtures of neural operators (MoNOs) acting between
Sobolev spaces. Specifically, any nonlinear continuous operator
$G^+ : H^{s_1} \to H^{s_2}$ can be approximated with arbitrary accuracy in the
$L^2$-norm, uniformly on compact subsets $K \subset H^{s_1}$, by NOs and MoNOs.
In addition, \cite{Krat2} provides quantitative bounds on the depth, width, and
rank of the approximating operators in terms of the compact set $K$ and the
target accuracy.

Recent work has also established universal approximation theorems for
hypercomplex-valued neural networks, including complex-, quaternion-, and
Clifford-valued architectures, as well as for more general vector-valued
networks (V-nets) defined over finite-dimensional algebras (see \cite{Valle} and
references therein).

It should be remarked that, in the shallow setting, universality for scalar-valued networks 
on Banach spaces of continuous functions \cite{Chen2} and on general topological spaces 
\cite{Ism2026b}, as well as for networks with values in finite-dimensional algebras 
\cite{Valle}, has been established for activation functions belonging 
to the Tauber--Wiener class. Moreover, for topological spaces with
the Hahn--Banach extension property, scalar-valued universality can be
characterized by non-polynomiality of the activation function on a
sufficiently small interval \cite{Ism2026a}.

The present work differs from much of the above literature in two essential
aspects. First, in addition to treating vector-valued shallow
networks without width constraints, we investigate deep networks with
a uniform width bound and study whether universality can be retained in this
setting. Second, we consider approximation of vector-valued functions defined
on arbitrary topological input spaces. The only restriction of our main
result is that the output space must be a finite-dimensional Euclidean space.
The techniques developed here do not yet extend to neural networks with
outputs in infinite-dimensional spaces. However, we hope that the results of
this paper will stimulate further exploration of shallow and deep neural networks with
outputs taking values in abstract infinite-dimensional spaces.

\section{Universal approximation on topological spaces}

In this section we formulate and prove universality results for 
topological feedforward neural networks whose inputs range over a general
topological space $X$.

To treat approximation in the vector-valued
setting, we work in the space $C(X;\mathbb{R}^m)$ of continuous functions from
$X$ into $\mathbb{R}^m$. In the sequel, $C(X;\mathbb{R}^m)$ is equipped with
the topology of uniform convergence on compact sets. This topology is induced
by the family of seminorms
\[
\|g\|_{K}
=
\sup_{x\in K}\|g(x)\|_{\mathbb{R}^m},
\]
where $K$ ranges over all compact subsets of $X$ and
$\|\cdot\|_{\mathbb{R}^m}$ denotes the Euclidean norm on $\mathbb{R}^m$
(or, equivalently, any fixed norm on $\mathbb{R}^m$).

A subbasis at the origin for this topology is given by the sets
\[
U(K,r)
=
\left\{
g\in C(X;\mathbb{R}^m): \|g\|_{K}<r
\right\},
\]
where $K\subset X$ is compact and $r>0$.

Thus, in what follows, when we say that a family of functions
$\mathcal{F}$ acting from $X$ into $\mathbb{R}^m$ is dense in
$C(X;\mathbb{R}^m)$, we always mean density with respect to the above topology
of uniform convergence on compact sets. That is, $\mathcal{F}$ is dense in
$C(X;\mathbb{R}^m)$ if, for every
$g\in C(X;\mathbb{R}^m)$, every compact set $K\subset X$, and every $\varepsilon>0$, 
there exists $f\in\mathcal{F}$ such that
\[
\|g-f\|_{K}<\varepsilon.
\]
Equivalently, for every compact set $K\subset X$, the collection of
restrictions
\[
\{\,f|_{K} : f\in\mathcal{F}\,\}
\]
is dense in $C(K;\mathbb{R}^m)$ with respect to the uniform seminorm
$\|\cdot\|_{K}$.

Our objective is to relate approximation on the abstract input space $X$ to
approximation on Euclidean spaces. This requires, on the one hand, suitable
richness assumptions on the feature family
$\mathcal{A}(X)\subset C(X)$, and, on the other hand, approximation properties
of the activation function $\sigma$. In this context we distinguish between
two cases. The first case, which covers shallow TFNNs without width 
constraints, can be treated using a structural density condition on the feature 
family, called the $D$-property. The second case, corresponding to deep but 
narrow networks, requires additional ideas that reduce the approximation problem 
to a finite-dimensional setting and exploit depth to compensate for width constraints.

\subsection{\texorpdfstring{The $D$-property}{The D-property}}

We begin by defining a structural density property of feature families that
allows approximation on $X$ to be reduced to approximation of univariate
functions composed with suitable feature maps.

\begin{definition}[$D$-property]
A family $\mathcal{A}(X)\subset C(X)$ is said to have the \emph{$D$-property}
if the linear span
\begin{equation}
\mathcal{S}
=
\operatorname{span}
\{\,u\circ f : u\in C(\mathbb{R}),\ f\in\mathcal{A}(X)\,\}
\label{eq:Dproperty}
\end{equation}
is dense in $C(X)$ with respect to the topology of uniform convergence on
compact sets.

Equivalently, $\mathcal{A}(X)$ has the $D$-property 
if and only if for every function $g\in C(X)$, every
compact set $K\subset X$, and every
$\varepsilon>0$, there exist an integer $m\in\mathbb{N}$, feature maps
$f_1,\dots,f_m\in\mathcal{A}(X)$, and univariate continuous functions
$u_1,\dots,u_m\in C(\mathbb{R})$ such that
\[
\max_{x\in K}
\left|
g(x)-\sum_{i=1}^m u_i\bigl(f_i(x)\bigr)
\right|
<\varepsilon.
\]
\end{definition}

\begin{remark}
The $D$-property asserts that the feature family $\mathcal{A}(X)$ is
sufficiently rich to approximate arbitrary continuous functions on $X$
through finite superpositions with continuous univariate functions. In
particular, the $D$-property implies a strong separation capability: for any
compact set $K\subset X$ and any two distinct points $x,y\in K$, there exists
a function in the span \eqref{eq:Dproperty} that takes different values at
$x$ and $y$.

In the classical setting $X=\mathbb{R}^d$, the role of $\mathcal{A}(X)$ is
played by the family of linear functionals
$\mathbf{x}\mapsto \mathbf{a}\cdot \mathbf{x}$. In this case, density of finite
superpositions of ridge functions $u(\mathbf{a}\cdot \mathbf{x})$ is a
well-studied topic in approximation theory (see, e.g.,
\cite{Ism,Pin}).

\end{remark}

\subsection{A univariate approximation assumption}

To approximate compositions of the form $u\circ f$ by neural networks, we require
that the activation function be able to approximate arbitrary continuous
functions of a single real variable. The following assumption
captures this requirement.

\begin{definition}[Univariate UAP for $\sigma$]
\label{def:uniUAP}
An activation function $\sigma:\mathbb{R}\to\mathbb{R}$ is said to satisfy the
\emph{univariate universal approximation property} if for every compact
interval $[a,b]\subset\mathbb{R}$ and every $\varepsilon>0$, each function
$u\in C([a,b])$ can be approximated uniformly on $[a,b]$ by finite linear
combinations of shifted and rescaled copies of $\sigma$, that is, by functions
of the form
\[
t\mapsto \sum_{j=1}^N c_j\,\sigma(w_j t-\theta_j),
\qquad c_j,w_j,\theta_j\in\mathbb{R}.
\]
\end{definition}

\begin{remark}
Definition~\ref{def:uniUAP} concerns approximation by shallow (single-hidden-layer)
univariate neural networks. This property is classical and is satisfied by many
commonly used activation functions. In particular, it holds for continuous non-polynomial 
activations \cite{Leshno} and characterizes the Tauber--Wiener class
considered in several papers (see, e.g., \cite{Chen2,Ism2026b,Valle}). 
In the present section, only this univariate
approximation property is required.
\end{remark}

\subsection{Universality of TFNNs in the arbitrary-width case}

We now establish a universality statement for topological feedforward neural
networks with vector-valued outputs. Throughout this subsection, no
restriction is imposed on the width: the number of hidden neurons depends 
both on the target function and on the desired accuracy.

For a topological space $X$, a feature family $\mathcal{A}(X)\subset C(X)$,
and an activation function $\sigma:\mathbb{R}\to\mathbb{R}$, we denote by
$\mathcal{N}_1^{(m)}(\sigma)$ the class of all TFNNs with one hidden layer
and $m$ outputs of the form described in Definition~\ref{def:singleTFNN}.
More generally, for each $l\ge1$, we denote by $\mathcal{N}_l^{(m)}(\sigma)$
the class of all deep TFNNs of depth $l$ with $m$ outputs as in
Definition~\ref{def:deepTFNN}.

\begin{theorem}[Universality from the $D$-property]
\label{thm:main}
Let $X$ be a topological space and let $\mathcal{A}(X)\subset C(X)$ be a
feature family with the $D$-property. Assume that the activation function
$\sigma:\mathbb{R}\to\mathbb{R}$ satisfies the univariate universal
approximation property. Then the TFNN class
$\mathcal{N}_1^{(m)}(\sigma)$ is dense in $C(X;\mathbb{R}^m)$ with respect to the 
topology of uniform convergence on compact sets.
That is, for every function $g\in C(X;\mathbb{R}^m)$, every compact
set $K\subset X$, and every
$\varepsilon>0$, there exists a TFNN
$H\in\mathcal{N}_1^{(m)}(\sigma)$ such that
\[
\|g-H\|_{K}<\varepsilon.
\]
\end{theorem}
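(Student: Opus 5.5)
The proof is a componentwise two-stage approximation. We first approximate each coordinate of $g$ by a sum of ridge-type superpositions $u_i\circ f_i$ using the $D$-property. We then replace each univariate $u_i$ by a shallow univariate $\sigma$-network using Definition~\ref{def:uniUAP}. Finally, we assemble the resulting hidden neurons into a single feature map $T$ with a common output matrix $A$.

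\textbf{Reduction to coordinates.} Fix $g=(g_1,\dots,g_m)\in C(X;\mathbb{R}^m)$, a compact $K\subset X$, and $\varepsilon>0$. Since $\|v\|_{\mathbb{R}^m}\le\sqrt{m}\max_j|v_j|$, it suffices to find $H\in\mathcal{N}_1^{(m)}(\sigma)$ with $\max_{x\in K}|g_j(x)-H_j(x)|<\varepsilon/\sqrt{m}$ for every $j$. Set $\delta=\varepsilon/(2\sqrt{m})$. For each $j$, the $D$-property gives feature maps $f_{j,1},\dots,f_{j,n_j}\in\mathcal{A}(X)$ and functions $u_{j,1},\dots,u_{j,n_j}\in C(\mathbb{R})$ such that
\[
\max_{x\in K}\Bigl|g_j(x)-\sum_{i=1}^{n_j}u_{j,i}\bigl(f_{j,i}(x)\bigr)\Bigr|<\delta .
\]

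\textbf{Univariate approximation.} This step is where compactness enters, and it is the only point needing care. Each $f_{j,i}$ is continuous and $K$ is compact, so $f_{j,i}(K)$ is contained in a compact interval $[a_{j,i},b_{j,i}]$. By Definition~\ref{def:uniUAP}, applied to $u_{j,i}|_{[a_{j,i},b_{j,i}]}$, there are finitely many $c,w,\theta$ with
\[
\sup_{t\in[a_{j,i},b_{j,i}]}\Bigl|u_{j,i}(t)-\sum_{k}c_{j,i,k}\,\sigma(w_{j,i,k}t-\theta_{j,i,k})\Bigr|<\delta/n_j .
\]
Substituting $t=f_{j,i}(x)$ for $x\in K$ and summing over $i$ shows that
\[
\widetilde H_j(x)=\sum_{i}\sum_{k}c_{j,i,k}\,\sigma\bigl(w_{j,i,k}f_{j,i}(x)-\theta_{j,i,k}\bigr)
\]
is within $\delta$ of $\sum_i u_{j,i}(f_{j,i}(x))$ on $K$. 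Hence $|g_j-\widetilde H_j|<2\delta=\varepsilon/\sqrt{m}$ on $K$.

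\textbf{Assembly into one network.} List all triples $(j,i,k)$ as hidden neurons $1,\dots,r$. Let $T(x)$ have the coordinate $w_{j,i,k}f_{j,i}(x)-\theta_{j,i,k}$ for each triple, which is an admissible feature map of the form in Definition~\ref{def:singleTFNN}. Let the matrix $A\in\mathbb{R}^{m\times r}$ have entry $c_{j,i,k}$ in row $j$ at the column of neuron $(j,i,k)$, and zeros elsewhere. Then $H=A\,\sigma(T(\cdot))$ has components $H_j=\widetilde H_j$, so $H\in\mathcal{N}_1^{(m)}(\sigma)$ and $\|g-H\|_K<\varepsilon$.

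There is no genuine obstacle here. The main point to get right is that the univariate approximation is only needed on the compact ranges $f_{j,i}(K)$, which is exactly what Definition~\ref{def:uniUAP} supplies.
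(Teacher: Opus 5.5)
Your proof is correct and follows essentially the same route as the paper: you approximate each component via the $D$-property, then replace each outer function by a shallow univariate $\sigma$-network on a compact interval containing $f_{j,i}(K)$, with the same $\varepsilon/(2\sqrt{m})$ error budget. The only cosmetic differences are that you bound the Euclidean norm by $\sqrt{m}\max_j|v_j|$ instead of summing squares, and you write out the block structure of the output matrix $A$, which the paper leaves implicit in ``by construction.''
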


\begin{proof}
Fix a vector-valued function
\[
g=(g_1,\dots,g_m)\in C(X;\mathbb{R}^m),
\]
a compact set $K\subset X$ and $\varepsilon>0$. We construct a TFNN $H=(H_1,\dots,H_m)$ satisfying
\[
\|g-H\|_{K}
=
\sup_{x\in K}\|g(x)-H(x)\|_{\mathbb{R}^m}
<\varepsilon.
\]

With the Euclidean norm on $\mathbb{R}^m$,
\[
\|g-H\|_{K}
=
\sup_{x\in K}
\left(
\sum_{k=1}^m |g_k(x)-H_k(x)|^2
\right)^{1/2}
\le
\left(
\sum_{k=1}^m \|g_k-H_k\|_{K}^2
\right)^{1/2},
\]
where for scalar-valued functions we use the seminorm
$\|h\|_{K}=\sup_{x\in K}|h(x)|$.

Accordingly, we proceed by constructing, for each component 
$k=1,\dots,m$, a scalar TFNN $H_k$ such that
\[
\|g_k-H_k\|_{K}<\frac{\varepsilon}{\sqrt{m}}.
\]

Fix $k\in\{1,\dots,m\}$ and consider the scalar function $g_k$. By the $D$-property of $\mathcal{A}(X)$, the
linear span of compositions
\[
\operatorname{span}\{u\circ f:\ u\in C(\mathbb{R}),\ f\in\mathcal{A}(X)\}
\]
is dense in $C(X)$ in the topology of uniform convergence on compact sets.
Hence, we can approximate $g_k$
uniformly on $K$ by a finite sum of such compositions. Concretely,
there exist an integer $M_k\in\mathbb{N}$, feature maps
$f_{k,1},\dots,f_{k,M_k}\in\mathcal{A}(X)$, and continuous functions
$u_{k,1},\dots,u_{k,M_k}\in C(\mathbb{R})$ such that
\begin{equation}
\sup_{x\in K}
\left|
g_k(x)-\sum_{i=1}^{M_k} u_{k,i}\bigl(f_{k,i}(x)\bigr)
\right|
<
\frac{\varepsilon}{2\sqrt{m}}.
\label{eq:s1}
\end{equation}

Fix $i\in\{1,\dots,M_k\}$. Since $f_{k,i}$ is continuous and $K$ is
compact, the image
\[
f_{k,i}(K)\subset\mathbb{R}
\]
is compact. Hence there exists a compact interval $[a_{k,i},b_{k,i}]$
with
\[
f_{k,i}(K)\subset [a_{k,i},b_{k,i}].
\]
(For example, take $a_{k,i}=\min_{x\in K}f_{k,i}(x)$ and
$b_{k,i}=\max_{x\in K}f_{k,i}(x)$.)

Thus, to approximate $u_{k,i}(f_{k,i}(x))$ uniformly on $x\in K$, it is
enough to approximate the scalar function $u_{k,i}$ uniformly on the
interval $[a_{k,i},b_{k,i}]$.

By Definition~\ref{def:uniUAP}, for each $i$ there exist an integer
$N_{k,i}\in\mathbb{N}$ and real parameters
$c_{k,i,j},w_{k,i,j},\theta_{k,i,j}\in\mathbb{R}$ such that
\begin{equation}
\sup_{t\in[a_{k,i},b_{k,i}]}
\left|
u_{k,i}(t)-\sum_{j=1}^{N_{k,i}}
c_{k,i,j}\,\sigma(w_{k,i,j}t-\theta_{k,i,j})
\right|
<
\frac{\varepsilon}{2\sqrt{m}\,M_k}.
\label{eq:s3}
\end{equation}
Because $f_{k,i}(K)\subset[a_{k,i},b_{k,i}]$, the same estimate holds
when we substitute $t=f_{k,i}(x)$ with $x\in K$.
Substituting $t=f_{k,i}(x)$ in \eqref{eq:s3} gives
\[
\sup_{x\in K}
\left|
u_{k,i}\bigl(f_{k,i}(x)\bigr)
-
\sum_{j=1}^{N_{k,i}}
c_{k,i,j}\,\sigma\bigl(w_{k,i,j}f_{k,i}(x)-\theta_{k,i,j}\bigr)
\right|
<
\frac{\varepsilon}{2\sqrt{m}\,M_k}.
\]
Now sum over $i=1,\dots,M_k$ and use the triangle inequality:
\begin{align}
&
\sup_{x\in K}
\left|
\sum_{i=1}^{M_k} u_{k,i}\bigl(f_{k,i}(x)\bigr)
-
\sum_{i=1}^{M_k}\sum_{j=1}^{N_{k,i}}
c_{k,i,j}\,\sigma\bigl(w_{k,i,j}f_{k,i}(x)-\theta_{k,i,j}\bigr)
\right|
\nonumber\\
&\qquad\le
\sum_{i=1}^{M_k}
\sup_{x\in K}
\left|
u_{k,i}\bigl(f_{k,i}(x)\bigr)
-
\sum_{j=1}^{N_{k,i}}
c_{k,i,j}\,\sigma\bigl(w_{k,i,j}f_{k,i}(x)-\theta_{k,i,j}\bigr)
\right|
<
\frac{\varepsilon}{2\sqrt{m}}.
\label{eq:s4}
\end{align}
Thus, the finite sum of compositions in \eqref{eq:s1} is uniformly
approximated on $K$ by a finite linear combination of terms of the
form $\sigma(w f(x)-\theta)$.

Define the scalar function
\[
H_k(x)
=
\sum_{i=1}^{M_k}\sum_{j=1}^{N_{k,i}}
c_{k,i,j}\,\sigma\bigl(w_{k,i,j}f_{k,i}(x)-\theta_{k,i,j}\bigr),
\qquad x\in X.
\]
Then $H_k \in\mathcal{N}_1^{(1)}(\sigma)$, which is built from
feature maps in $\mathcal{A}(X)$ and activation $\sigma$.

For each $x\in K$, we insert and subtract the intermediate approximation from
\eqref{eq:s1} and use the triangle inequality:
\begin{align*}
|g_k(x)-H_k(x)|
&\le
\left|
g_k(x)-\sum_{i=1}^{M_k} u_{k,i}\bigl(f_{k,i}(x)\bigr)
\right|
+
\left|
\sum_{i=1}^{M_k} u_{k,i}\bigl(f_{k,i}(x)\bigr)-H_k(x)
\right| \\
&<
\frac{\varepsilon}{2\sqrt{m}}+\frac{\varepsilon}{2\sqrt{m}}
=
\frac{\varepsilon}{\sqrt{m}},
\end{align*}
where we used \eqref{eq:s1} for the first term and \eqref{eq:s4} for the
second term. Taking the supremum over $x\in K$ in all three absolute values yields
\[
\|g_k-H_k\|_{K}<\frac{\varepsilon}{\sqrt{m}}.
\]

We repeat the above for each component $k=1,\dots,m$, obtaining scalar
approximants $H_1,\dots,H_m$. Define
\[
H(x)=(H_1(x),\dots,H_m(x)),\qquad x\in X.
\]
By construction, $H$ is a single hidden layer TFNN with $m$ outputs, i.e.
$H\in\mathcal{N}_1^{(m)}(\sigma)$.

Finally, since $\|g_k-H_k\|_{K}<\varepsilon/\sqrt{m}$ for each $k=1,\dots,m$, it follows that
\[
\|g-H\|_{K}
=
\sup_{x\in K}
\left(
\sum_{k=1}^m |g_k(x)-H_k(x)|^2
\right)^{1/2}
\le
\left(
\sum_{k=1}^m \|g_k-H_k\|_{K}^2
\right)^{1/2}
<
\left(
\sum_{k=1}^m \frac{\varepsilon^2}{m}
\right)^{1/2}
=
\varepsilon.
\]
This completes the proof.
\end{proof}

In Theorem~\ref{thm:main}, the target function is taken
from $C(X;\mathbb{R}^m)$ rather than $C(K;\mathbb{R}^m)$ for a fixed compact set
$K\subset X$. This is due to the formulation of the $D$-property as a density
statement in $C(X)$ with respect to the topology of uniform convergence on
compact sets.

In general topological spaces, a function defined on a compact subset $K$
need not admit a continuous extension to the whole space $X$. Therefore,
working with functions in $C(X;\mathbb{R}^m)$ avoids the need for extension
arguments. However, in spaces where continuous extension from closed sets is available,
such as normal topological spaces, one may equivalently formulate the
approximation property for functions in $C(K;\mathbb{R}^m)$.

\begin{corollary}
Let $X$ be a normal topological space and let $\mathcal{A}(X)\subset C(X)$
satisfy the $D$-property. Assume that $\sigma$ satisfies the univariate
universal approximation property. Then for every compact set $K\subset X$,
every function $g\in C(K;\mathbb{R}^m)$, and every $\varepsilon>0$, there exists
$H\in \mathcal{N}_1^{(m)}(\sigma)$ such that
\[
\|g-H\|_{K}<\varepsilon.
\]
\end{corollary}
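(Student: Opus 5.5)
The plan is to reduce the corollary to Theorem~\ref{thm:main} by extending the target function from $K$ to all of $X$. Fix a compact set $K\subset X$, a function $g=(g_1,\dots,g_m)\in C(K;\mathbb{R}^m)$ and $\varepsilon>0$. We would like to produce $\tilde g\in C(X;\mathbb{R}^m)$ with $\tilde g|_K=g$. Theorem~\ref{thm:main}, applied to $\tilde g$, $K$ and $\varepsilon$, then yields $H\in\mathcal{N}_1^{(m)}(\sigma)$ with $\|\tilde g-H\|_K<\varepsilon$. Since $\tilde g=g$ on $K$, this is exactly $\|g-H\|_K<\varepsilon$.

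The extension is built componentwise with the Tietze extension theorem, which holds in normal spaces. This requires $K$ to be closed in $X$. Here lies the one genuine subtlety: a compact subset of a general topological space need not be closed. If normality is understood in the usual convention that includes the $T_1$ (hence, with normality, Hausdorff) axiom, then compact sets are closed and there is no problem. I will make this convention explicit, since without some separation axiom the statement can fail for a non-closed compact $K$.

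Granting closedness, the steps are as follows.

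\begin{enumerate}
\item Each component $g_k\in C(K)$ is continuous on the closed set $K$. Tietze's theorem therefore gives $\tilde g_k\in C(X)$ with $\tilde g_k|_K=g_k$. Boundedness is automatic, since $g_k(K)$ is compact, though it is not needed for the extension of real-valued functions.
\item Set $\tilde g=(\tilde g_1,\dots,\tilde g_m)$. This map is continuous into $\mathbb{R}^m$ because its components are continuous, and it agrees with $g$ on $K$.
\item Invoke Theorem~\ref{thm:main} as described above.
\end{enumerate}

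No estimates beyond those in the theorem are needed. The main obstacle is only the closedness of $K$, which I would address by stating the Hausdorff/$T_1$ convention for normal spaces.
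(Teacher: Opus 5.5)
Your proposal is correct and follows the paper's proof exactly: extend each component of $g$ to $X$ by Tietze, apply Theorem~\ref{thm:main} to the extension, and use that it agrees with $g$ on $K$. Your remark on closedness is a legitimate refinement. The paper simply asserts ``$K$ is closed,'' which needs the $T_1$ convention for normal spaces; without that convention, a compact set need not be closed and the statement can indeed fail.
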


\begin{proof}
Let $K\subset X$ be compact, let $g=(g_1,\dots,g_m)\in C(K;\mathbb{R}^m)$,
and let $\varepsilon>0$. Since $X$ is normal and $K$ is closed, the Tietze
extension theorem implies that for each $j=1,\dots,m$ there exists
$G_j\in C(X)$ such that $G_j|_K=g_j$. Define
$G=(G_1,\dots,G_m)\in C(X;\mathbb{R}^m)$; then $G|_K=g$.

Since $\mathcal{A}(X)$ has the $D$-property and $\sigma$ satisfies the
univariate universal approximation property, Theorem~\ref{thm:main}
yields a TFNN $H\in \mathcal{N}_1^{(m)}(\sigma)$ such that
\[
\|G-H\|_{K}<\varepsilon.
\]
Because $G=g$ on $K$, it follows that $\|g-H\|_{K}=\|G-H\|_{K}<\varepsilon$.
\end{proof}

\subsection{Locally convex spaces}

We now analyze an important and widely applicable special case: locally convex
topological vector spaces. In this setting there is a natural
choice of feature family, namely the continuous dual space.

Throughout this subsection, all locally convex 
topological vector spaces are assumed to be Hausdorff.

Let $X$ be a locally convex topological vector space over $\mathbb{R}$, and
denote by $X^*$ its continuous dual. Elements of $X^*$ play the role of linear
measurements of the input, generalizing the inner products
$\mathbf{w}\cdot\mathbf{x}$ that appear in classical neural networks on
$\mathbb{R}^d$.

\begin{theorem}[Universality on locally convex spaces]
\label{thm:lcs}
Let $X$ be a locally convex topological vector space and let
$\sigma:\mathbb{R}\to\mathbb{R}$ satisfy the univariate universal approximation
property of Definition~\ref{def:uniUAP}. Then, for every integer $m\ge1$, the TFNN class
$\mathcal{N}_1^{(m)}(\sigma)$ associated with the feature family
$\mathcal{A}(X)=X^*$ is dense in $C(X;\mathbb{R}^m)$ with respect to the 
topology of uniform convergence on compact sets.
\end{theorem}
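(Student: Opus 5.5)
The plan is to reduce Theorem~\ref{thm:lcs} to Theorem~\ref{thm:main}. Since $\sigma$ is assumed to have the univariate universal approximation property, it suffices to show that $\mathcal{A}(X)=X^*$ has the $D$-property. That is, for every compact $K\subset X$, every $g\in C(X)$ and every $\varepsilon>0$, we need a finite sum $\sum_i u_i(f_i(x))$ with $u_i\in C(\mathbb{R})$ and $f_i\in X^*$ that is uniformly $\varepsilon$-close to $g$ on $K$.

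To obtain this, consider the algebra
\[
\mathcal{P}=\operatorname{span}\{\,f_1\cdots f_n : n\ge 0,\ f_i\in X^*\,\},
\]
the polynomials in continuous linear functionals, restricted to $K$. This is a subalgebra of $C(K)$ containing the constants. Since $X$ is Hausdorff and locally convex, the Hahn--Banach theorem shows that $X^*$ separates the points of $X$, and hence of $K$. The Stone--Weierstrass theorem on the compact Hausdorff space $K$ then makes $\mathcal{P}|_K$ dense in $C(K)$. In particular, $g|_K$ is approximable to within $\varepsilon/2$ by some $p\in\mathcal{P}$.

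Next, rewrite $p$ as a sum of ridge-type terms $u(f(x))$. Each monomial $f_1\cdots f_n$ is a homogeneous polynomial of degree $n$ in the finitely many variables $y_i=f_i(x)$. The classical polarization identity, equivalently the fact that the powers $(\mathbf{a}\cdot\mathbf{y})^n$ with $\mathbf{a}\in\mathbb{R}^n$ span the homogeneous polynomials of degree $n$ on $\mathbb{R}^n$, expresses $y_1\cdots y_n$ exactly as a finite linear combination of $n$-th powers $(\sum_i a_i y_i)^n$. Because $\sum_i a_i f_i\in X^*$ by linearity of the dual, each monomial equals $\sum_j c_j\,(h_j(x))^n$ with $h_j\in X^*$. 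Taking $u(t)=c_j t^n\in C(\mathbb{R})$ shows $p\in\mathcal{S}$ exactly. This yields the $D$-property, and Theorem~\ref{thm:main} then gives density of $\mathcal{N}_1^{(m)}(\sigma)$ in $C(X;\mathbb{R}^m)$.

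I expect the main point needing care to be the point-separation step, which is where the Hausdorff assumption enters through Hahn--Banach: for $x\neq y$ there is a convex open neighbourhood of $0$ missing $x-y$, and separating $x-y$ from it gives $f\in X^*$ with $f(x)\ne f(y)$. The polarization step is finite-dimensional and standard; I would either cite it or write the explicit identity
\[
n!\,y_1\cdots y_n=\sum_{\epsilon\in\{0,1\}^n}(-1)^{n-|\epsilon|}\Bigl(\sum_i \epsilon_i y_i\Bigr)^n .
\]
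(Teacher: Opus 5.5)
Your proof is correct. It follows the same overall plan as the paper: verify the $D$-property for $X^*$ by Stone--Weierstrass on each compact $K$, with point separation from Hahn--Banach, then invoke Theorem~\ref{thm:main}. The difference is which algebra you feed to Stone--Weierstrass.

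The paper uses $\mathcal{E}=\operatorname{span}\{e^{f}: f\in X^*\}$. This is closed under multiplication because $e^{f_1}e^{f_2}=e^{f_1+f_2}$ with $f_1+f_2\in X^*$. So the inclusion $\mathcal{E}\subset\mathcal{S}$ is immediate by taking $u=\exp$, and nothing further is needed.

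You use the polynomial algebra $\mathcal{P}$ generated by $X^*$ instead. Closure under multiplication is trivial for $\mathcal{P}$, but membership in $\mathcal{S}$ is not, and the polarization identity does that work. Your identity is correct, and it covers repeated factors $f_i$ too, since it is a polynomial identity in $y_1,\dots,y_n$.

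As for what each route buys: the paper's version is slightly shorter because the exponential algebra sits inside $\mathcal{S}$ for free, and it shows that the single outer function $u=\exp$ already suffices. Yours is the direct infinite-dimensional analogue of the classical ridge-function argument on $\mathbb{R}^d$, where homogeneous polynomials are spanned by powers of linear forms. It shows that the monomial outer functions $u(t)=t^n$ suffice. Neither refinement is needed for the theorem, and both arguments are complete.
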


\begin{proof}
We first verify that the feature family $\mathcal{A}(X)=X^*$ satisfies the
$D$-property. Define
\[
\mathcal{S}
=
\operatorname{span}
\{\,u\circ f : u\in C(\mathbb{R}),\ f\in X^*\,\}
\subset C(X).
\]
We show that $\mathcal{S}$ is dense in $C(X)$ with respect to uniform
convergence on compact sets.

Let $K\subset X$ be an arbitrary compact set. Consider the set
\[
\mathcal{E}
=
\operatorname{span}
\{\,e^{f(x)} : f\in X^*\,\}
\subset C(X).
\]
This set is an algebra, since
\[
e^{f_1(x)}e^{f_2(x)} = e^{(f_1+f_2)(x)}
\quad\text{and}\quad
f_1+f_2\in X^*,
\]
and it contains the constant functions because $e^{0}=1$.

Moreover, $\mathcal{E}$ separates points of $K$. Indeed, since $X$ is locally
convex, the Hahn--Banach continuous extension theorem holds.
It is a consequence of this theorem that for any $x,y\in K$
with $x\neq y$ there exists $f\in X^*$ such that $f(x)\neq f(y)$ (see, e.g.,
\cite[Theorem~3.6]{Rudin}). Consequently,
\[
e^{f(x)}\neq e^{f(y)}.
\]

By the Stone--Weierstrass theorem \cite{Stone}, the restriction
$\mathcal{E}|_{K}$ is dense in $C(K)$ with respect to the uniform norm.
On the other hand, each generator $e^{f(x)}$ belongs to $\mathcal{S}$ (take
$u(t)=e^{t}$), hence $\mathcal{E}\subset \mathcal{S}$ and therefore
$\mathcal{E}|_K \subset \mathcal{S}|_K$. It follows that $\mathcal{S}|_K$ is
dense in $C(K)$. As $K\subset X$ was arbitrary, this proves that
$\mathcal{A}(X)=X^*$ has the $D$-property.

We now apply Theorem~\ref{thm:main}. Let $m\ge1$, let $K\subset X$ be compact,
and let $g\in C(X;\mathbb{R}^m)$ be arbitrary. Since $\mathcal{A}(X)=X^*$ has
the $D$-property and $\sigma$ satisfies the univariate universal approximation
property, Theorem~\ref{thm:main} implies that for every $\varepsilon>0$ there
exists a TFNN
\[
H\in\mathcal{N}_1^{(m)}(\sigma)
\]
such that
\[
\|g-H\|_{K}<\varepsilon.
\]
The theorem is proved.
\end{proof}

A classical and influential result in this direction is due to Chen and
Chen \cite[Theorem~4]{Chen2}, who studied approximation of continuous
functionals by shallow neural networks on a compact subset $V \subset C(Y)$. 
Their theorem shows that, if the activation function belongs to the
Tauber--Wiener class, then every continuous functional on $V$ can be
uniformly approximated by shallow neural network functionals whose inputs
depend only on finitely many point evaluations
$u(x_1),\dots,u(x_k)$, where $u\in V$.

In the present framework, a corresponding approximation result can be obtained
as a consequence of the universality theorem for locally convex spaces
(Theorem~\ref{thm:lcs}), together with the fact that, on compact subsets of
$C(Y)$, continuous linear functionals can be uniformly approximated by finite
linear combinations of point-evaluation functionals. The following theorem
makes this connection explicit for continuous activation functions.

\begin{theorem}[Chen and Chen \cite{Chen2}]
\label{thm:chen-chen}
Let $Y$ be a compact metric space and let $V\subset C(Y)$ be compact. 
Assume that the activation $\sigma \in C(\mathbb{R})$
satisfies the univariate universal approximation property of
Definition~\ref{def:uniUAP}. Then for every continuous functional $f\in C(V)$
and every $\varepsilon>0$ there exist integers $N,k\ge1$, points
$x_1,\dots,x_k\in Y$, and real parameters $c_i,\theta_i,\xi_{ij}$
($i=1,\dots,N$, $j=1,\dots,k$) such that
\[
\left|
f(u)-\sum_{i=1}^{N}c_i\,
\sigma\!\left(\sum_{j=1}^{k}\xi_{ij}\,u(x_j)-\theta_i\right)
\right|
<\varepsilon
\]
holds for all $u\in V$.
\end{theorem}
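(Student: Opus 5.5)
Since $V$ is compact and metrizable, hence normal, the Tietze extension theorem lets us extend $f$ to a continuous functional $F$ on all of $C(Y)$, which is a Banach space and therefore locally convex. (This is exactly how the Corollary to Theorem~\ref{thm:main} was proved.) Theorem~\ref{thm:lcs} with $m=1$ then yields a shallow network
\[
H(u)=\sum_{i=1}^{N} c_i\,\sigma\bigl(w_i\ell_i(u)-\theta_i\bigr),\qquad \ell_i\in C(Y)^*,
\]
with $\sup_{u\in V}|f(u)-H(u)|<\varepsilon/2$. Absorbing $w_i$ into $\ell_i$, it remains to replace each $\ell_i$ by a finite combination of point evaluations at common nodes $x_1,\dots,x_k$. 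The error this creates must stay below $\varepsilon/2$ uniformly on $V$.

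\textbf{Step 1: the nodes.} The main obstacle is approximating $\ell_i(u)$ by $\sum_j \xi_{ij}u(x_j)$ uniformly over $u\in V$. Arzel\`a--Ascoli gives that $V$ is bounded and equicontinuous. For $\eta>0$, pick a finite $\eta$-net $x_1,\dots,x_k$ of $Y$ such that $|u(y)-u(y')|<\eta$ for all $u\in V$ whenever $d(y,y')<\delta$. Choose a partition of unity $\{\phi_j\}$ subordinate to the balls $B(x_j,\delta)$ and define the linear operator
\[
P_\eta u=\sum_{j=1}^{k} u(x_j)\phi_j .
\]
Then $\|u-P_\eta u\|_\infty\le\eta$ for all $u\in V$.

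\textbf{Step 2: the coefficients.} Set $\xi_{ij}=\ell_i(\phi_j)$. This gives
\[
\Bigl|\ell_i(u)-\sum_j \xi_{ij}u(x_j)\Bigr| = |\ell_i(u-P_\eta u)|\le \|\ell_i\|\,\eta .
\]
The nodes $x_j$ are the same for every $i$, as the statement requires.

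\textbf{Step 3: the error.} The arguments $\ell_i(u)-\theta_i$ for $u\in V$, together with their perturbations, lie in a fixed compact interval $I$, since $V$ is bounded and the perturbations are at most $\|\ell_i\|\eta$. Because $\sigma$ is continuous, it is uniformly continuous on $I$. Choosing $\eta$ small enough then makes $\sum_i |c_i|\,\bigl|\sigma(\ell_i(u)-\theta_i)-\sigma(\sum_j\xi_{ij}u(x_j)-\theta_i)\bigr|<\varepsilon/2$ for all $u\in V$. The triangle inequality finishes the proof.
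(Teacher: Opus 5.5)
Your proof is correct and has the same skeleton as the paper's: Tietze extension of $f$ to $C(Y)$, then Theorem~\ref{thm:lcs} with $m=1$, then replacement of each $\ell_i$ by point evaluations uniformly on $V$, and finally uniform continuity of $\sigma$ on a compact interval. The genuine difference is the point-evaluation step. The paper writes $\ell_i(u)=\int_Y u\,d\mu_i$ by the Riesz theorem and simply asserts that equicontinuity of $V$ gives nodes and weights approximating $\ell_i$ uniformly on $V$. The natural way to fill this in is a Riemann sum $\sum_j u(x_j)\mu_i(A_j)$ over a Borel partition of $Y$ into cells of diameter $<\delta$. The paper then merges the node sets of the different $\ell_i$ and pads with zero coefficients. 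Your quasi-interpolant $P_\eta u=\sum_j u(x_j)\phi_j$ with $\xi_{ij}=\ell_i(\phi_j)$ needs no measure theory, only linearity and boundedness of $\ell_i$. Its nodes depend only on $V$ and $\eta$, so they are common to all $i$ from the start, and it gives the explicit bound $\|\ell_i\|\eta$. The measure formulation lets one read the weights as a quadrature rule for $\mu_i$, but as written it leaves the key approximation unproved; your construction supplies exactly that argument.

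Three small repairs are needed:
\begin{itemize}
\item Tietze's theorem requires the ambient space $C(Y)$ to be normal (it is metrizable) and $V$ to be closed in it (it is compact in a Hausdorff space). Normality of $V$ itself is not the relevant hypothesis.
\item In Step~1 the nodes should form a $\delta$-net, not an $\eta$-net. Here $\delta$ is the uniform equicontinuity modulus for $\eta$, which is uniform because $Y$ is compact.
\item In Step~3, impose a cap such as $\eta\le1$ before fixing $I$, so that $I$ does not depend on $\eta$.
\end{itemize}
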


\begin{proof}
We view $C(Y)$ as a Banach space and hence as a Hausdorff locally convex space.
Since $C(Y)$ is a metric space, it is normal, and the compact set $V\subset C(Y)$
is closed. Therefore, by the Tietze extension theorem, the function
$f\in C(V)$ admits a continuous extension $F\in C(C(Y))$.
Applying Theorem~\ref{thm:lcs} to $F$ with $X=C(Y)$ and $m=1$, we obtain
$N\ge1$, functionals $\ell_1,\dots,\ell_N\in C(Y)^*$, and parameters
$c_i,\theta_i\in\mathbb{R}$ such that
\[
\sup_{u\in V}\left|
F(u)-\sum_{i=1}^{N} c_i\,\sigma(\ell_i(u)-\theta_i)
\right|
<\frac{\varepsilon}{2}.
\]
Since $F=f$ on $V$, it follows that
\begin{equation}
\sup_{u\in V}\left|
f(u)-\sum_{i=1}^{N} c_i\,\sigma(\ell_i(u)-\theta_i)
\right|
<\frac{\varepsilon}{2}.
\label{eq:sA}
\end{equation}

Next we approximate each $\ell_i$ uniformly on $V$ by finitely many point
evaluations. Since $V\subset C(Y)$ is compact, it is equicontinuous by the
Arzelà--Ascoli theorem. Let $\ell\in C(Y)^*$ be arbitrary. By the Riesz
representation theorem, $\ell(u)=\int_Y u\,d\mu$ for a finite signed Borel
measure $\mu$ on $Y$. Equicontinuity of $V$ implies that for
every $\delta>0$ there exist points $x_1,\dots,x_k\in Y$ and coefficients
$\alpha_1,\dots,\alpha_k\in\mathbb{R}$ such that
\begin{equation}
\sup_{u\in V}\left|
\ell(u)-\sum_{j=1}^{k}\alpha_j\,u(x_j)
\right|
<\delta.
\label{eq:sB}
\end{equation}

Now apply \eqref{eq:sB} to each
$\ell_i$, $i=1,\dots,N$. For each $i$ this yields a finite set of sample points
$\{x_{i,1},\dots,x_{i,k_i}\}\subset Y$ and coefficients $\alpha_{i,j}$ such that
\[
\sup_{u\in V}\left|
\ell_i(u)-\sum_{j=1}^{k_i}\alpha_{i,j}\,u(x_{i,j})
\right|
<\delta.
\]
Let $\{x_1,\dots,x_k\}$ be the union of all these sampling points over
$i=1,\dots,N$. By setting the coefficients to zero if necessary, each sum may be rewritten
over this common set, yielding coefficients $\xi_{ij}$ such that
\[
\sup_{u\in V}\left|
\ell_i(u)-\sum_{j=1}^{k}\xi_{ij}\,u(x_j)
\right|
<\delta,
\qquad i=1,\dots,N.
\]

Since $V$ is compact and each $\ell_i$ is continuous, the set
$\ell_i(V)\subset\mathbb{R}$ is compact. Therefore $\sigma$ is uniformly
continuous on a compact interval containing both $\ell_i(V)$ and the
approximants $\sum_j\xi_{ij}u(x_j)$ for $u\in V$ (for $\delta$ small enough).
Choose $\delta>0$ so that
\[
|s-t|<\delta
\quad\Longrightarrow\quad
|\sigma(s-\theta_i)-\sigma(t-\theta_i)|
<
\frac{\varepsilon}{2\,\sum_{i=1}^{N}|c_i|+1}
\qquad (i=1,\dots,N).
\]
Then for every $u\in V$,
\[
\left|
c_i\sigma(\ell_i(u)-\theta_i)
-
c_i\sigma\!\left(\sum_{j=1}^{k}\xi_{ij}u(x_j)-\theta_i\right)
\right|
\le
|c_i|\cdot
\frac{\varepsilon}{2\,\sum_{i=1}^{N}|c_i|+1}.
\]
Summing over $i=1,\dots,N$ gives
\begin{equation}
\sup_{u\in V}\left|
\sum_{i=1}^{N} c_i\,\sigma(\ell_i(u)-\theta_i)
-
\sum_{i=1}^{N} c_i\,\sigma\!\left(\sum_{j=1}^{k}\xi_{ij}u(x_j)-\theta_i\right)
\right|
<
\frac{\varepsilon}{2}.
\label{eq:sC}
\end{equation}

Finally, combine \eqref{eq:sA} and \eqref{eq:sC} by the triangle inequality. 
This yields the desired representation with error $<\varepsilon$.
\end{proof}

\begin{remark}
Theorem~\ref{thm:lcs} shows that locally convex spaces (including Banach spaces and
Fr\'echet spaces) form a broad and natural class of input domains for which
universality of topological feedforward neural networks holds. 
This universality result is based on the abstract
$D$-property and the separating power of the continuous dual. 
The subsequent Theorem~\ref{thm:chen-chen} of Chen and Chen may be viewed as
a special case of this general principle applied to the Banach space $C(Y)$. 
In the next section, we turn to the substantially more delicate question of
whether universality can be retained under uniform width constraints, that is,
in the deep narrow framework.
\end{remark}

\section{Universality of deep narrow TFNNs}

In this section we study whether universal approximation on a general
topological space $X$ can be achieved by deep neural networks with uniformly
bounded width. To address this question, it is not sufficient to rely solely on
the $D$-property. While this property ensures approximation by finite sums of
univariate compositions, it does not provide control over the number of neurons
required in each hidden layer.

To overcome this limitation, we impose a \emph{finite-dimensional
composition} assumption on the feature family $\mathcal{A}(X)$. Under this 
assumption, approximation on $X$ can be reduced to approximation on a compact 
subset of $\mathbb{R}^n$, where known deep narrow universality results, 
such as those of Gripenberg \cite{Grip} and Kidger and Lyons \cite{Kid}, apply.

\subsection{Finite-dimensional composition via feature maps}

We begin by formalizing the composition property. The key point, compared with
the $D$-property, is that for each compact set $K\subset X$ and each output
dimension $m$ one fixes a finite collection of feature maps, and approximation
of different target functions is achieved solely by varying the outer function.

\begin{definition}[Finite-dimensional composition]
\label{def:fd-density}
A feature family $\mathcal{A}(X)\subset C(X)$ is said to have the
\emph{finite-dimensional composition property of order $n$}
if for every compact set $K\subset X$ and every integer $m\ge1$ there exist
functions $f_1,\dots,f_n\in\mathcal{A}(X)$ such that, with
$F=(f_1,\dots,f_n):X\to\mathbb{R}^n$, the set
\[
\{\,u\circ F|_{K} : u\in C(\mathbb{R}^n;\mathbb{R}^m)\,\}
\]
is dense in $C(K;\mathbb{R}^m)$.
\end{definition}

\begin{remark}
Definition~\ref{def:fd-density} means that, on each compact subset $K$ of $X$,
every continuous $\mathbb{R}^m$-valued function can be approximated arbitrarily
well by first mapping $x\in X$ to the Euclidean feature vector
$F(x)\in\mathbb{R}^n$ and then applying a continuous map
$u:\mathbb{R}^n\to\mathbb{R}^m$. In contrast to the $D$-property, the same
finite set of features $f_1,\dots,f_n$ is used for all target functions and
all accuracies; only the mapping $u$ is allowed to change.
\end{remark}

The finite-dimensional composition property of
Definition~\ref{def:fd-density} is formulated as a density condition.
In many situations, however, this property holds in a stronger form, namely
with exact representations rather than mere approximation.
The following proposition identifies situations in which 
such a strong finite-dimensional composition property holds on compact sets.

\begin{proposition}[Strong composition property]
\label{prop:strong-fd-composition}
Let $X$ be a topological space and let $K\subset X$ be compact.

\smallskip
\noindent
\textnormal{(i)}
Suppose that there exists a feature map
\[
F=(f_1,\dots,f_n):X\to\mathbb{R}^n,
\qquad f_i\in\mathcal{A}(X),
\]
such that the restriction $F|_K:K\to F(K)$ is a topological embedding. Then
the finite-dimensional composition property on $K$ holds in a strong form: for
every integer $m\ge1$ and every function $g\in C(K;\mathbb{R}^m)$ there exists
a function $u\in C(\mathbb{R}^n;\mathbb{R}^m)$ such that
\[
g = u\circ F|_K .
\]

\smallskip
\noindent
\textnormal{(ii)}
Suppose that there exists a topological embedding
\[
E=(E_1,\dots,E_n):K\hookrightarrow \mathbb{R}^n
\]
and functions $f_1,\dots,f_n\in\mathcal{A}(X)$ such that
\[
f_i|_K = E_i,\qquad i=1,\dots,n.
\]
Let $F=(f_1,\dots,f_n):X\to\mathbb{R}^n$. Then the same strong finite-dimensional composition property
on $K$ holds:
for every $m\ge1$ and every $g\in C(K;\mathbb{R}^m)$ there exists
$u\in C(\mathbb{R}^n;\mathbb{R}^m)$ such that
\[
g = u\circ F|_K .
\]
\end{proposition}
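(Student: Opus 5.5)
The plan is to factor $g$ through the embedding and then extend the resulting map from the compact image to all of $\mathbb{R}^n$ by Tietze. For part (i), set $Z=F(K)\subset\mathbb{R}^n$. Since $F$ is continuous and $K$ is compact, $Z$ is compact, hence closed in $\mathbb{R}^n$. Because $F|_K:K\to Z$ is a topological embedding onto its image, it is a homeomorphism $K\to Z$ and has a continuous inverse $(F|_K)^{-1}:Z\to K$. Given $g\in C(K;\mathbb{R}^m)$, I will define
\[
v=g\circ (F|_K)^{-1}:Z\to\mathbb{R}^m,
\]
which is continuous as a composition of continuous maps and satisfies $v\circ F|_K=g$ on $K$.

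The second step is the extension. Since $\mathbb{R}^n$ is a metric space, it is normal, and $Z$ is closed. The Tietze extension theorem, applied to each component $v_1,\dots,v_m$ separately (as in the proof of the Corollary to Theorem~\ref{thm:main}), gives $u_j\in C(\mathbb{R}^n)$ with $u_j|_Z=v_j$. Setting $u=(u_1,\dots,u_m)\in C(\mathbb{R}^n;\mathbb{R}^m)$, we get $u\circ F|_K=v\circ F|_K=g$, which is the exact representation claimed.

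Part (ii) I intend to reduce directly to (i). Since $f_i|_K=E_i$ for every $i$, we have $F|_K=E$. Hence $F|_K$ is a topological embedding of $K$ into $\mathbb{R}^n$, so the hypothesis of (i) holds and the conclusion follows verbatim.

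There is essentially no serious obstacle; the only point needing care is the topological bookkeeping in part (i). First, an embedding means a homeomorphism onto its image with the subspace topology, which is what makes $(F|_K)^{-1}$ continuous. This does not require $K$ to be Hausdorff; if $K$ happens to be Hausdorff, a continuous injection from the compact space $K$ would already suffice. Second, $Z$ must be closed so that Tietze applies, and this follows from compactness of $Z$ in the Hausdorff space $\mathbb{R}^n$. Finally, I will remark that exact representation trivially implies the density required in Definition~\ref{def:fd-density}, which is why the proposition is called a strong form of that property.
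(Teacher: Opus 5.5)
Your proposal is correct and is essentially the paper's own proof: in (i) you transport $g$ to the compact, hence closed, set $F(K)$ via the continuous inverse $(F|_K)^{-1}$ and extend componentwise by Tietze, and in (ii) you note $F|_K=E$ and invoke (i). One small refinement to your aside: the Hausdorff assumption on $K$ is unnecessary there, because any continuous injection from a compact space into the Hausdorff space $\mathbb{R}^n$ is automatically an embedding.
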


\begin{proof}
\textnormal{(i)}
Since $K$ is compact and $F$ is continuous, the set $F(K)\subset\mathbb{R}^n$
is compact and hence closed. As $F|_K:K\to F(K)$ is a topological embedding, it
is a homeomorphism between $K$ and $F(K)$. Given any $g\in C(K;\mathbb{R}^m)$, define a function
$v:F(K)\to\mathbb{R}^m$ by
\[
v(y)=g\bigl((F|_K)^{-1}(y)\bigr), \qquad y\in F(K).
\]
Then $v$ is continuous on the compact set $F(K)$. By the Tietze extension
theorem \cite[Theorem~15.8]{Wil} (applied componentwise), there exists a function
$u\in C(\mathbb{R}^n;\mathbb{R}^m)$ such that $u|_{F(K)}=v$. Consequently,
for all $x\in K$,
\[
(u\circ F)(x)=u(F(x))=v(F(x))=g(x),
\]
and hence $g=u\circ F|_K$.

\smallskip
\textnormal{(ii)}
By assumption, $E:K\to\mathbb{R}^n$ is a topological embedding and
\[
F|_K=(f_1|_K,\dots,f_n|_K)=(E_1,\dots,E_n)=E.
\]
Hence $F|_K$ is a topological
embedding as well, and the conclusion follows directly from part~(i).
\end{proof}

\begin{remark}
Proposition~\ref{prop:strong-fd-composition} separates two
distinct issues. Part~(i) shows that, once a finite-dimensional feature map
$F=(f_1,\dots,f_n)$ with $f_i\in\mathcal{A}(X)$ is available whose restriction
to a compact set $K$ is a topological embedding, the finite-dimensional
composition property on $K$ holds in a strong form, in fact yielding exact
representations $g=u\circ F|_K$ for all $g\in C(K;\mathbb{R}^m)$.

Part~(ii) clarifies how such feature maps may arise in practice. In many
settings, the existence of an embedding $E:K\hookrightarrow\mathbb{R}^n$ is a
purely topological fact, independent of the choice of feature family
$\mathcal{A}(X)$ (for example, by classical embedding theorems for compact
metric spaces). The additional requirement in part~(ii) is that the coordinate
functions of such an embedding can be realized by elements of
$\mathcal{A}(X)$. In this case, the embedding $E$ can be extended to a feature map
$F$ on $X$, and the strong finite-dimensional composition property on $K$
follows.
\end{remark}

\subsection{Deep narrow TFNNs}

We now consider the class of bounded-width deep networks 
that will be used in the sequel.

Fix $n\in\mathbb{N}$ and a feature map $F:X\to\mathbb{R}^n$ of the form
$F=(f_1,\dots,f_n)$ with $f_i\in\mathcal{A}(X)$. Any (classical) deep
feedforward network $\Phi:\mathbb{R}^n\to\mathbb{R}^m$ with activation
$\sigma$ then induces a vector-valued network on $X$ by composition:
\[
H(x)=\Phi(F(x)),\qquad x\in X.
\]
Intuitively, $F$ provides the input coordinates, and $\Phi$ performs the
standard Euclidean deep processing. We use the standard notion of width for 
Euclidean deep networks: the width of a network is the maximum 
number of neurons in any hidden layer of the Euclidean network $\Phi$.

\begin{definition}[Deep narrow TFNNs based on $F$]
\label{def:widthk}
Let $k\in\mathbb{N}$ and $m\ge1$. We denote by $\mathcal{N}^{(k,m)}(X,\sigma;F)$
the class of all functions $H:X\to\mathbb{R}^m$ for which there exists a
(classical) deep feedforward network $\Phi:\mathbb{R}^n\to\mathbb{R}^m$ with
activation function $\sigma$ and with width at most $k$ such that
\[
H(x)=\Phi(F(x)),\qquad x\in X.
\]
We refer to the elements of $\mathcal{N}^{(k,m)}(X,\sigma;F)$ as
\emph{deep narrow TFNNs with $m$ outputs based on $F$}.
\end{definition}

\subsection{Universality theorem under a width bound}

We now state the universality theorem for deep narrow TFNNs.

\begin{theorem}[Universality of deep narrow TFNNs]
\label{thm:skinny}
Let $X$ be a topological space and let $\mathcal{A}(X)\subset C(X)$ satisfy
the finite-dimensional composition property of order $n$
(Definition~\ref{def:fd-density}). Assume that $\sigma:\mathbb{R}\to\mathbb{R}$
is continuous and nonaffine, and that there exists a point $t_0\in\mathbb{R}$
and an open neighborhood $U$ of $t_0$ such that $\sigma$ is differentiable on
$U$ and $\sigma'$ is continuous and nonzero at $t_0$.

Then the following holds: for every integer $m\ge1$ and every compact set
$K\subset X$, if $f_1,\dots,f_n\in\mathcal{A}(X)$ are feature maps provided by
Definition~\ref{def:fd-density} for this $K$ and $m$, and
$F=(f_1,\dots,f_n):X\to\mathbb{R}^n$, then for every function
$g\in C(K;\mathbb{R}^m)$ and every $\varepsilon>0$, there exists a function
\[
H\in \mathcal{N}^{(n+m+2,m)}(X,\sigma;F)
\]
such that
\[
\|g-H\|_{K}<\varepsilon.
\]
That is, the class $\mathcal{N}^{(n+m+2,m)}(X,\sigma;F)|_{K}$ is dense in
$C(K;\mathbb{R}^m)$.
\end{theorem}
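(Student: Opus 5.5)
The plan is to reduce the problem to the Euclidean deep narrow theorem of Kidger and Lyons \cite{Kid} via the feature map $F$. The argument is a standard two-step $\varepsilon/2$ approximation.

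\emph{Step 1.} Fix $m$, $K$, the maps $f_1,\dots,f_n$ and $F=(f_1,\dots,f_n)$ supplied by Definition~\ref{def:fd-density}, a target $g\in C(K;\mathbb{R}^m)$ and $\varepsilon>0$. By the finite-dimensional composition property, choose $u\in C(\mathbb{R}^n;\mathbb{R}^m)$ with
\[
\sup_{x\in K}\|g(x)-u(F(x))\|_{\mathbb{R}^m}<\varepsilon/2 .
\]
Since $F$ is continuous and $K$ is compact, the set $L=F(K)\subset\mathbb{R}^n$ is compact.

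\emph{Step 2.} Invoke the Kidger--Lyons theorem. Its hypotheses are that $\sigma$ is continuous and nonaffine, and differentiable near $t_0$ with $\sigma'$ continuous and nonzero at $t_0$; these are exactly the assumptions here. Its conclusion is that Euclidean deep networks $\Phi:\mathbb{R}^n\to\mathbb{R}^m$ with activation $\sigma$ and width at most $n+m+2$ are dense in $C(L;\mathbb{R}^m)$ for every compact $L\subset\mathbb{R}^n$. We therefore obtain such a $\Phi$ with
\[
\sup_{y\in L}\|u(y)-\Phi(y)\|_{\mathbb{R}^m}<\varepsilon/2 .
\]
Set $H=\Phi\circ F$. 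By Definition~\ref{def:widthk}, $H\in\mathcal{N}^{(n+m+2,m)}(X,\sigma;F)$. For $x\in K$ we have $F(x)\in L$, so substituting $y=F(x)$ and applying the triangle inequality gives $\|g-H\|_K<\varepsilon$.

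\emph{Main obstacle.} The delicate point is matching conventions with \cite{Kid}. That paper's networks may be stated with an affine rather than linear output layer, and its width count may be set up differently, whereas Definition~\ref{def:widthk} requires the last map to be linear. To handle this, I would first check whether the Kidger--Lyons construction already produces a linear readout. If it does not, I would absorb the output bias into the preceding layer: using the neuron where $\sigma'(t_0)\ne0$, feed a constant through an identity-like channel, or alternatively approximate $u-b$ instead of $u$ and recover the constant $b$ through a neuron fixed at a constant input value. This may require checking that the width budget $n+m+2$ accommodates the extra channel, which I expect to be possible because the Kidger--Lyons construction reserves registers for the outputs and for carrying the input.

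A second, smaller point is that Definition~\ref{def:widthk} counts width only in $\Phi$, so the $n$ feature maps do not count as a hidden layer. This is consistent with the width-$(n+m+2)$ result on $\mathbb{R}^n$.
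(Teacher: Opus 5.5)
Your proposal is correct and is essentially the paper's proof: the composition property gives $u$ with $\|g-u\circ F\|_K<\varepsilon/2$, Kidger--Lyons is applied to $u$ on the compact set $F(K)\subset\mathbb{R}^n$ with tolerance $\varepsilon/2$, and setting $H=\Phi\circ F$ finishes the argument by the triangle inequality. The paper does not discuss the affine-versus-linear output-layer mismatch you flag, and your remedy fits the width budget: append one hidden layer of width $m+1\le n+m+2$ made of the neurons $\sigma(t_0+h\,o_j)$, $j=1,\dots,m$ (with $o$ the affine Kidger--Lyons output), plus a constant neuron $\sigma(c)\neq0$ (such $c$ exists since $\sigma$ is nonaffine), and then use that $\bigl(\sigma(t_0+hs)-\sigma(t_0)\bigr)/(h\,\sigma'(t_0))\to s$ uniformly on bounded sets as $h\to0$.
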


\begin{proof}
Fix $m\ge1$, a compact set $K\subset X$, a target function
$g\in C(K;\mathbb{R}^m)$, and $\varepsilon>0$.

By the finite-dimensional composition property
(Definition~\ref{def:fd-density}) applied to this particular compact set $K$
and output dimension $m$, there exist feature maps
$f_1,\dots,f_n\in\mathcal{A}(X)$ and a continuous map
$u\in C(\mathbb{R}^n;\mathbb{R}^m)$ such that, writing
\[
F=(f_1,\dots,f_n):X\to\mathbb{R}^n,
\]
we have
\begin{equation}
\|g-u\circ F\|_{K}<\frac{\varepsilon}{2}.
\label{eq:st1}
\end{equation}
Here the feature map $F$ depends on $K$ and $m$, while the map $u$ additionally depends
on $g$ and $\varepsilon$.

Since $F$ is continuous and $K$ is compact, the image
\[
K_F:=F(K)\subset\mathbb{R}^n
\]
is compact. Consider the restriction $u|_{K_F}\in C(K_F;\mathbb{R}^m)$.

Under the stated regularity assumptions on $\sigma$, the deep narrow
universality theorem of Kidger and Lyons \cite[Theorem~3.2]{Kid} yields the
following conclusion: for the compact set $K_F\subset\mathbb{R}^n$ and
tolerance $\varepsilon/2$, there exists a classical deep feedforward network
\[
\Phi:\mathbb{R}^n\to\mathbb{R}^m
\]
with activation function $\sigma$ and with width at most $n+m+2$ such that
\begin{equation}
\max_{y\in K_F}\|u(y)-\Phi(y)\|_{\mathbb{R}^m}<\frac{\varepsilon}{2}.
\label{eq:st2}
\end{equation}

Define
\[
H(x)=\Phi(F(x)),\qquad x\in X.
\]
By Definition~\ref{def:widthk}, since $\Phi$ has width at most $n+m+2$, we have
\[
H\in \mathcal{N}^{(n+m+2,m)}(X,\sigma;F).
\]

Let $x\in K$. Then $F(x)\in K_F$, and therefore \eqref{eq:st2} implies
\[
\|u(F(x))-\Phi(F(x))\|_{\mathbb{R}^m}<\frac{\varepsilon}{2}.
\]
Together with \eqref{eq:st1}, we obtain
\[
\|g(x)-H(x)\|_{\mathbb{R}^m}
\le
\|g(x)-u(F(x))\|_{\mathbb{R}^m}
+
\|u(F(x))-\Phi(F(x))\|_{\mathbb{R}^m}
<
\frac{\varepsilon}{2}+\frac{\varepsilon}{2}
=
\varepsilon.
\]
Taking the maximum over $x\in K$ yields $\|g-H\|_{K}<\varepsilon$, which
completes the proof.
\end{proof}

\begin{remark}[Scope and realizability of Theorem~\ref{thm:skinny}]
The finite-dimensional composition property of order $n$ imposes a
structural restriction on the input space $X$. It requires that for
every compact set $K\subset X$ and every output dimension $m$, there exist
continuous feature maps
\[
F=(f_1,\dots,f_n):X\to\mathbb{R}^n
\]
such that every continuous $\mathbb{R}^m$-valued function on $K$ can be
uniformly approximated by functions of the form $u\circ F$ with
$u\in C(\mathbb{R}^n;\mathbb{R}^m)$.

In particular, for each compact set $K\subset X$, the restriction $F|_K$ must
be injective. Indeed, if $x,y\in K$ with $x\neq y$ and $F(x)=F(y)$, then
\[
(u\circ F)(x)=(u\circ F)(y)
\quad\text{for every } u\in C(\mathbb{R}^n;\mathbb{R}^m).
\]
Assume for the moment that $X$ is Hausdorff. Then $K$ is compact Hausdorff and
hence normal. Since $\{x\}$ and $\{y\}$ are disjoint closed subsets of $K$,
Urysohn's lemma \cite[Theorem~15.6]{Wil} yields a function $h\in C(K)$ such that $h(x)\neq h(y)$.
Consequently,
$g=(h,0,\dots,0)\in C(K;\mathbb{R}^m)$
satisfies $g(x)\neq g(y)$. Such a function cannot be approximated uniformly on
$K$ by functions of the form $u\circ F$, contradicting
Definition~\ref{def:fd-density}. Hence $F|_K$ is injective.

We see that if $X$ is Hausdorff, then $F|_K$ is a continuous injective map from the compact
space $K$ into the Hausdorff space $\mathbb{R}^n$, and therefore a topological
embedding of $K$ into $\mathbb{R}^n$ (a homeomorphism onto its image $F(K)$ \cite[Theorem~3.1.13]{Engel2}).
On the other hand, whenever $F|_K$ is a topological embedding,
Proposition~\ref{prop:strong-fd-composition}(i) implies that
every function $g\in C(K;\mathbb{R}^m)$ admits an exact representation
\[
g = u\circ F|_K
\qquad\text{for some } u\in C(\mathbb{R}^n;\mathbb{R}^m).
\]
It follows that, when $X$ is Hausdorff, the finite-dimensional composition
property on a compact set $K\subset X$ automatically implies its strong form
on $K$.

In the class of separable metric spaces, the above composition property is closely
related to finiteness of the topological (covering) dimension. More precisely,
if $X$ is a separable metric space with $\dim_{\mathrm{top}} X \le d$, then by
the Menger--N\"obeling embedding theorem \cite[Theorem~1.11.4]{Engel} 
there exists a topological embedding
\[
E:X\hookrightarrow\mathbb{R}^{2d+1}.
\]
If, in addition, the feature family $\mathcal{A}(X)$ is sufficiently rich to
contain the coordinate functions of such an embedding, that is, if
\[
E=(E_1,\dots,E_{2d+1}) \quad\text{with}\quad E_i\in\mathcal{A}(X),
\]
then Definition~\ref{def:fd-density} is satisfied with $n=2d+1$ by taking
$f_i=E_i$ and $F=E$. In this case, the same fixed feature map $F$ embeds every
compact subset of $X$ into $\mathbb{R}^{2d+1}$, and Proposition~\ref{prop:strong-fd-composition}(ii)
yields exact representations on each compact set.

As a special case, when $X=\mathbb{R}^n$ and $\mathcal{A}(X)$ consists of the
coordinate projections, the feature map $F$ coincides with the identity on
$\mathbb{R}^n$. In this setting, Theorem~\ref{thm:skinny} reduces exactly to the
deep narrow universality theorem of Kidger and Lyons for Euclidean input spaces
and vector-valued outputs.
\end{remark}

\subsection{Deep narrow universality based on Ostrand inner functions}

We begin by recalling the classical Kolmogorov superposition theorem (KST)
and its extension due to Ostrand. For background, refinements, and
generalizations of KST we refer to \cite[Chapter~1]{Kh} and
\cite[Chapter~4]{Ism}. Kolmogorov's original theorem \cite{Kol} asserts that for the unit
cube $\mathbb{I}^d$, $\mathbb{I}=[0,1]$, $d\ge2$, there exist continuous
functions $s_q\in C(\mathbb{I}^d)$, $q=1,\dots,2d+1$, of the form
\[
s_q(x_1,\dots,x_d)=\sum_{p=1}^d \varphi_{pq}(x_p),
\qquad \varphi_{pq}\in C(\mathbb{I}),
\]
such that every $f\in C(\mathbb{I}^d)$ admits a representation
\[
f(x)=\sum_{q=1}^{2d+1} g_q\bigl(s_q(x)\bigr),
\qquad g_q\in C(\mathbb{R}).
\]
This deep result, which solved Hilbert's 13th problem in the negative, has
stimulated extensive research and has also been discussed at length in
connection with neural networks (see, e.g., \cite{AV} and the references
therein).

Ostrand \cite{Ost} extended KST to general compact metric spaces as follows.

\begin{theorem}[Ostrand {\cite{Ost}}]
\label{thm:ostrand}
For $p=1,\dots,n_0$, let $X_p$ be a compact metric space of finite
(topological) dimension $d_p$, and set $M=\sum_{p=1}^{n_0} d_p$.
There exist universal continuous functions
$\psi_{pq}:X_p\to[0,1]$, $p=1,\dots,n_0$, $q=1,\dots,2M+1$, such that every
$f\in C\!\left(\prod_{p=1}^{n_0} X_p\right)$ is representable in the form
\[
f(x_1,\dots,x_{n_0})
=
\sum_{q=1}^{2M+1}
h_q\!\left(\sum_{p=1}^{n_0}\psi_{pq}(x_p)\right),
\qquad h_q\in C(\mathbb{R}).
\]
\end{theorem}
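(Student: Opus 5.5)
Since Theorem~\ref{thm:ostrand} is quoted from \cite{Ost}, I would prove it along Kolmogorov's original scheme, replacing the interval-based constructions on $\mathbb{I}$ by covering arguments from dimension theory. Throughout, write $X=\prod_{p=1}^{n_0}X_p$ and $s_q(x)=\sum_{p=1}^{n_0}\psi_{pq}(x_p)$. The proof splits into three steps: (1) a combinatorial covering lemma, (2) construction of the inner functions $\psi_{pq}$, and (3) a contraction lemma that is iterated to obtain the exact representation.

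\textbf{Step 1 (covering lemma).} For every $\delta>0$ I want $2M+1$ families $\mathcal{C}_1,\dots,\mathcal{C}_{2M+1}$. Each family consists of pairwise disjoint closed "bricks" $B=B_1\times\cdots\times B_{n_0}$, with $B_p\subset X_p$ closed and of diameter $<\delta$. The key requirement is that every $x\in X$ lies in bricks of at least $M+1$ of the $2M+1$ families.

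To obtain this, I would use the characterization of $\dim X_p\le d_p$ through closed covers of small mesh and order $\le d_p+1$. From such covers one can extract, for each $p$, families $\mathcal{C}^p_1,\dots,\mathcal{C}^p_{2M+1}$ of disjoint closed sets in $X_p$ such that every point of $X_p$ fails to be covered by at most $d_p$ of these families. Taking products, a point $x$ fails to lie in a brick of $\mathcal{C}_q$ only if some coordinate $x_p$ is missed by $\mathcal{C}^p_q$. This happens for at most $\sum_p d_p=M$ indices $q$, so $x$ is covered by at least $M+1$ families.

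\textbf{Step 2 (inner functions).} I take a sequence $\delta_k\downarrow 0$ and carry out Step~1 for each $k$, nesting or refining the families at successive stages. I would then construct $\psi_{pq}$ as uniform limits of functions $\psi^{(k)}_{pq}$. At stage $k$, each $\psi^{(k)}_{pq}$ is approximately constant on the sets of $\mathcal{C}^{p}_{q}$, and the constants are chosen rationally independent, or shifted by suitable decreasing amounts. The aim is that $s_q$ maps distinct bricks of $\mathcal{C}_q$ at stage $k$ into pairwise disjoint intervals. The corrections at later stages must be small enough not to destroy this separation, which makes the limits continuous with values in $[0,1]$.

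As an alternative to this explicit construction, I could use a Baire category argument in the complete metric space $\prod_{p,q}C(X_p,[0,1])$. I would show that, for each $f$ in a countable dense subset of $C(X)$ and each $k$, the set of tuples satisfying the approximation property of Step~3 is open and dense. The intersection of these sets is then nonempty.

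\textbf{Step 3 (contraction and iteration).} I would prove the following lemma: for every $f\in C(X)$ there exist $h_q\in C(\mathbb{R})$ with $\|h_q\|\le\frac{1}{M+1}\|f\|$ and
$$
\Bigl\|f-\sum_{q=1}^{2M+1}h_q\circ s_q\Bigr\|\le\theta\|f\|,\qquad \theta=\frac{2M+1}{2M+2}<1.
$$
To construct $h_q$, choose $\delta$ so that $f$ oscillates by less than $\eta\|f\|$ on each brick. On the image interval of each brick $B\in\mathcal{C}_q$, let $h_q$ equal $\frac{1}{M+1}f(x_B)$ for some $x_B\in B$, and interpolate linearly in the gaps between image intervals. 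Every point lies in bricks of at least $M+1$ "good" families, whose terms reproduce $f$ up to a small error. The at most $M$ remaining terms contribute at most $\frac{M}{M+1}\|f\|$. Choosing $\eta$ small enough gives the ratio $\theta$. Applying the lemma repeatedly to the successive residuals yields series $h_q=\sum_j h_q^{(j)}$ that converge uniformly by the geometric bound, and these give the exact representation.

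I expect the main obstacle to be Step~1 together with its compatibility with Step~2. The covering families must have mesh tending to zero while keeping the "at least $M+1$ of $2M+1$" property at every stage, and the separation of image intervals must survive all later corrections. That is where the dimension hypothesis enters essentially. For this step I would first try Ostrand's refinement of the Hurewicz--Wallman decomposition theorem for covers of compact metric spaces of dimension $d_p$.
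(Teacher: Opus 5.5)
The paper does not prove this statement; it quotes it from Ostrand. Your plan follows Ostrand's own route, which is Kolmogorov's scheme with the interval families replaced by a dimension-theoretic covering lemma, and its architecture is sound. As written, however, Step~2 has a real hole, exactly where you expected one. Steps~1 and~3 also need small corrections.

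\textbf{Step 2.} Nesting the families of a fixed index $q$ across stages is impossible once some $d_p\ge1$. Consider the sets $Z_q=\bigcap_k\bigcup\mathcal{C}^{p,(k)}_q$:
\begin{itemize}
\item each $Z_q$ is closed and zero-dimensional, since it has clopen partitions of mesh $<\delta_k$;
\item every point of $X_p$ lies in some $Z_q$, since its set of covering indices decreases in $k$ and stays nonempty;
\item so, by the sum theorem, $X_p=\bigcup_q Z_q$ would be zero-dimensional.
\end{itemize}
Fixing the meshes in advance is not enough either. $\psi^{(k)}_{pq}$ may oscillate a lot on a stage-$(k+1)$ set, and flattening it there could destroy the stage-$k$ separation.

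The fix is to choose the mesh adaptively:
\begin{itemize}
\item Make $\psi^{(k)}_{pq}$ exactly constant, with generic values, on each stage-$k$ set. Then $s^{(k)}_q$ takes distinct values on distinct stage-$k$ bricks, separated by some $\gamma_k>0$.
\item Put $\eta_{k+1}=2^{-k-1}\min_{i\le k}\gamma_i/(4n_0)$.
\item Only then choose $\delta_{k+1}$, by uniform continuity, so that every $\psi^{(k)}_{pq}$ oscillates by less than $\eta_{k+1}/2$ on sets of diameter $<\delta_{k+1}$.
\item Take stage-$(k+1)$ families of mesh $<\delta_{k+1}$. On each of their finitely many disjoint closed sets, set $\psi^{(k+1)}_{pq}$ equal to a generic constant within $\eta_{k+1}/2$ of some value of $\psi^{(k)}_{pq}$ there.
\item Extend the difference to $X_p$ by Tietze with bound $\eta_{k+1}$.
\end{itemize}
Then $\|s_q-s^{(k)}_q\|\le n_0\sum_{j>k}\eta_j<\gamma_k/4$ for every $k$. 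Hence the convex hulls of the images of distinct stage-$k$ bricks remain disjoint. You need hulls rather than images, because bricks need not be connected. A final affine rescaling, absorbed into the $h_q$, puts the values in $[0,1]$.

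\textbf{Step 1.} With $2M+1>d_p+1$ families, the property that each point of $X_p$ is missed by at most $d_p$ of them is not a routine extraction from a cover of order $\le d_p+1$. It is precisely Ostrand's covering lemma: $m\ge d_p+1$ disjoint families of small closed sets, any $d_p+1$ of which cover. It must be cited or proved. Your passage from these families to product bricks is then correct.

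\textbf{Step 3.} A point may lie in bricks of $G>M+1$ families. The good terms then add up to about $\frac{G}{M+1}f(x)$, not $f(x)$. The correct bookkeeping is
$|f(x)-\sum_q h_q(s_q(x))|\le\bigl(\frac{G-M-1}{M+1}+\frac{2M+1-G}{M+1}+\frac{(2M+1)\eta}{M+1}\bigr)\|f\|=\bigl(\frac{M}{M+1}+\frac{(2M+1)\eta}{M+1}\bigr)\|f\|$.
So $\eta=\frac{1}{2(2M+1)}$ still gives your $\theta=\frac{2M+1}{2M+2}$, and the iteration closes. Your Baire-category alternative also works, with its density step supplied by the same adaptive flattening.
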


Define the associated \emph{Ostrand inner
functions} on $X=\prod_{p=1}^{n_0}X_p$ by
\begin{equation}
s_q(x_1,\dots,x_{n_0})
:=
\sum_{p=1}^{n_0}\psi_{pq}(x_p),
\qquad q=1,\dots,2M+1.
\label{eq:ostrand-sq}
\end{equation}
Theorem~\ref{thm:ostrand} states that every scalar-valued continuous function
on $X$ is a finite sum of univariate superpositions $h_q\circ s_q$.
In particular, if the functions $s_q$ are available as admissible features,
then they yield a finite-dimensional composition in the sense of
Definition~\ref{def:fd-density}. Indeed, letting
\[
F=(s_1,\dots,s_{2M+1}):X\to\mathbb{R}^{2M+1},
\]
the exact superposition formula implies that every
$g\in C(K;\mathbb{R}^m)$ admits an exact representation of the form
$g=u\circ F|_K$ for some
$u\in C(\mathbb{R}^{2M+1};\mathbb{R}^m)$.
This is precisely the strong finite-dimensional composition property identified
in Proposition~\ref{prop:strong-fd-composition}.

\begin{theorem}[Deep narrow universality from Ostrand-type features]
Let $X=\prod_{p=1}^{n_0} X_p$ be a product of compact metric spaces, with
$\dim_{\mathrm{top}}(X_p)=d_p<\infty$, and set $M=\sum_{p=1}^{n_0} d_p$.
Let $\mathcal{A}(X)\subset C(X)$ be a feature family and assume that the
Ostrand inner functions $s_q$ defined in \eqref{eq:ostrand-sq} belong to
$\mathcal{A}(X)$ for $q=1,\dots,2M+1$. Let $F=(s_1,\dots,s_{2M+1})$ as above.
Assume that $\sigma:\mathbb{R}\to\mathbb{R}$ is continuous and nonaffine, and
that there exists $t_0\in\mathbb{R}$ and an open neighborhood $U$ of $t_0$
such that $\sigma$ is differentiable on $U$ and $\sigma'$ is continuous and
nonzero at $t_0$.

Then for every integer $m\ge1$, every compact set $K\subset X$, every function
$g\in C(K;\mathbb{R}^m)$, and every $\varepsilon>0$, there exists a function
\[
H\in \mathcal{N}^{(2M+m+3,m)}(X,\sigma;F)
\]
such that
\[
\|g-H\|_{K}<\varepsilon.
\]
Equivalently, for each fixed $m$ and compact $K\subset X$, the class
$\mathcal{N}^{(2M+m+3,m)}(X,\sigma;F)|_{K}$ is dense in $C(K;\mathbb{R}^m)$.
\end{theorem}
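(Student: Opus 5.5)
The plan is to reduce the statement to Theorem~\ref{thm:skinny} with $n=2M+1$. The width bound then comes out as $n+m+2=(2M+1)+m+2=2M+m+3$, which is exactly the width claimed. So the whole task is to check that, for every compact $K\subset X$ and every $m\ge1$, the fixed map $F=(s_1,\dots,s_{2M+1})$ is a valid choice in Definition~\ref{def:fd-density}. The regularity hypotheses on $\sigma$ are copied verbatim from Theorem~\ref{thm:skinny}, so no further work on the activation is needed.

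For the composition property I will show the strong form: every $g=(g_1,\dots,g_m)\in C(K;\mathbb{R}^m)$ can be written as $u\circ F|_K$. There are two routes, and I intend to use the second and simpler one.

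\emph{Route via Ostrand.} $X$ is a finite product of compact metric spaces, hence compact metrizable and normal. By Tietze, each $g_j$ extends to some $G_j\in C(X)$. Theorem~\ref{thm:ostrand} then gives $G_j=\sum_{q=1}^{2M+1}h_{jq}\circ s_q$ with $h_{jq}\in C(\mathbb{R})$. Setting
\[
u_j(y_1,\dots,y_{2M+1})=\sum_{q=1}^{2M+1}h_{jq}(y_q)
\]
produces a continuous $u=(u_1,\dots,u_m)$ with $u\circ F=G$ on $X$, so in particular $g=u\circ F|_K$. This already yields density in Definition~\ref{def:fd-density}, in fact with exact representation.

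\emph{Route via Proposition~\ref{prop:strong-fd-composition}(i).} I first check that $F$ is injective on $X$. If $F(x)=F(y)$ with $x\neq y$, take the continuous function $f=d(\cdot,x)$ for a compatible metric $d$ on $X$. Ostrand's representation of $f$ would force $f(x)=f(y)$, but $f(x)=0<f(y)$, a contradiction. A continuous injection from the compact space $K$ into the Hausdorff space $\mathbb{R}^{2M+1}$ is an embedding, so Proposition~\ref{prop:strong-fd-composition}(i) applies directly and gives the exact representation on $K$.

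Once the property is in hand, I apply Theorem~\ref{thm:skinny} with $n=2M+1$ and the features $f_q=s_q\in\mathcal{A}(X)$. The one subtlety I expect is in the quantifiers. Theorem~\ref{thm:skinny} is stated for ``the feature maps provided by Definition~\ref{def:fd-density}'', whereas here $F$ is prescribed in advance. The preceding step removes this issue: $F$ itself witnesses the property for every $K$ and every $m$. In any case, the proof of Theorem~\ref{thm:skinny} uses only the existence of some $u$ with $\|g-u\circ F\|_K<\varepsilon/2$, followed by the Kidger--Lyons theorem on the compact set $F(K)\subset\mathbb{R}^{2M+1}$. So I can also rerun that two-line argument directly with $u$ exact, which gives error $<\varepsilon$ and width at most $2M+m+3$. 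The only case needing a remark is $M=0$ (all $X_p$ finite), where the statement still follows formally from Theorem~\ref{thm:ostrand} as given.
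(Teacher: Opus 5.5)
Your proposal is correct and is essentially the paper's proof: Theorem~\ref{thm:ostrand} gives an exact representation $g=u\circ F|_K$ with $u_j(y)=\sum_{q}h_{jq}(y_q)$, and Kidger--Lyons is then applied on the compact set $F(K)\subset\mathbb{R}^{2M+1}$ with width $(2M+1)+m+2$. Your Tietze extension of each $g_j$ to $X$ fills in a step the paper leaves implicit, since the paper applies Theorem~\ref{thm:ostrand} directly to $g_r\in C(K)$; the only slip is your aside that $M=0$ means all $X_p$ are finite, which is false (the Cantor set is zero-dimensional), though this is harmless because the argument needs no special case and runs unchanged with $n=1$.
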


\begin{proof}
Fix $m\ge1$, a compact set $K\subset X$, a target function
$g=(g_1,\dots,g_m)\in C(K;\mathbb{R}^m)$, and $\varepsilon>0$.

By Theorem~\ref{thm:ostrand}, for each component $g_r\in C(K)$ ($r=1,\dots,m$)
there exist functions $h_{r,q}\in C(\mathbb{R})$, $q=1,\dots,2M+1$, such that
for all $x\in K$,
\[
g_r(x)=\sum_{q=1}^{2M+1} h_{r,q}\bigl(s_q(x)\bigr).
\]
Define $u:\mathbb{R}^{2M+1}\to\mathbb{R}^m$ by
\[
u(y_1,\dots,y_{2M+1})
:=
\left(
\sum_{q=1}^{2M+1} h_{1,q}(y_q),
\ \dots,\
\sum_{q=1}^{2M+1} h_{m,q}(y_q)
\right).
\]
Then $u\in C(\mathbb{R}^{2M+1};\mathbb{R}^m)$ and, by construction,
\[
g(x)=u(F(x))
\qquad\text{for all } x\in K.
\]
Thus, $g$ admits an exact representation of the form
$g=u\circ F|_K$, which is the strong finite-dimensional composition property
from Proposition~\ref{prop:strong-fd-composition}(i).

Since $F$ is continuous and $K$ is compact, the image $K_F:=F(K)$ is a compact
subset of $\mathbb{R}^{2M+1}$. By the deep narrow universality theorem of
Kidger and Lyons \cite[Theorem~3.2]{Kid}, applied to the compact set $K_F$ and
to the continuous map $u|_{K_F}\in C(K_F;\mathbb{R}^m)$, there exists a
(classical) deep feedforward network
\[
\Phi:\mathbb{R}^{2M+1}\to\mathbb{R}^m
\]
with activation $\sigma$ and width at most $(2M+1)+m+2$ such that
\[
\max_{y\in K_F}\|u(y)-\Phi(y)\|_{\mathbb{R}^m}<\varepsilon.
\]

Define $H=\Phi\circ F$. Then, by Definition~\ref{def:widthk},
\[
H\in \mathcal{N}^{(2M+m+3,m)}(X,\sigma;F).
\]
Moreover, for $x\in K$ we have $F(x)\in K_F$, hence
\[
\|g(x)-H(x)\|_{\mathbb{R}^m}
=
\|u(F(x))-\Phi(F(x))\|_{\mathbb{R}^m}
<\varepsilon.
\]
Taking the maximum over $x\in K$ yields $\|g-H\|_{K}<\varepsilon$.
\end{proof}

\begin{remark}
In \cite{Ism2026b} we introduced topological feedforward neural
networks and proved a universal approximation theorem for single-hidden-layer 
networks with scalar outputs on general topological spaces. As an application, we
derived an approximative version of the Kolmogorov superposition theorem for
compact metric spaces by replacing the non-fixed outer functions with a
single infinitely smooth activation function.

The present paper extends this viewpoint in two directions. First, we treat
vector-valued outputs and consider both shallow and deep architectures. Second, we address 
the bounded-width setting: deep narrow universality is
established under a finite-dimensional composition property, which identifies
when approximation on a general topological space can be reduced to
approximation on a Euclidean space.
In particular, for products of compact metric spaces we obtain explicit width
bounds expressed in terms of topological dimension, thereby linking
geometric and topological structure to quantitative architectural
constraints.
\end{remark}

\begin{remark}
Related results on approximation of mappings between general topological
spaces are developed in \cite{Krat}. 
In that work, one starts from a neural network class
$\mathcal{F}\subset C(\mathbb{R}^n,\mathbb{R}^m)$ that is already known to be
dense and investigates when density is preserved under modifications of the
input and output layers. More precisely, under suitable assumptions on
continuous maps $\phi:X\to\mathbb{R}^n$ and $\rho:\mathbb{R}^m\to Y$, in
particular assuming that $\phi$ is injective, the authors obtain density of the
composed class
\[
\{\,\rho\circ f\circ \phi : f\in\mathcal{F}\,\}
\]
in $C(X,Y)$ with respect to the topology of uniform convergence on compact
sets.

Our deep narrow construction is related in spirit to composition-based
transfer principles when $Y=\mathbb{R}^m$ and $\rho=\mathrm{id}$, since
networks on $X$ are represented as compositions $H=\Phi\circ F$, where
$\Phi$ is a Euclidean neural network and $F:X\to\mathbb{R}^n$ is a feature map. 
However, the role played by
$F$ in the present work differs fundamentally from the role of the fixed inner
map $\phi$ in \cite{Krat}. In \cite{Krat}, the map $\phi$ is assumed a priori
and serves as a fixed, globally injective mapping of $X$ into a Euclidean
space, through which approximation properties are transferred from
$\mathbb{R}^n$ to $X$.

In contrast, in our framework the feature map $F$ is intrinsic to the neural
network architecture on $X$ and is constructed from an admissible feature
family $\mathcal{A}(X)\subset C(X)$. In the universality results without width
constraints, the maps $F=(f_1,\dots,f_n)$ entering the first hidden
layer of the network representation depend on the compact set under
consideration, the target function, and the required accuracy, similarly to
trainable parameters in classical networks. In the deep narrow setting, a
finite-dimensional composition is imposed, and $F=(f_1,\dots,f_n)$ depends only
on the compact set $K$ and the output dimension $m$. Thus, in both the
arbitrary-width and bounded-width settings, 
the feature map $F$ is variable rather
than a single fixed global injective map as in \cite{Krat}.

Accordingly, in the present work injectivity of the feature map $F$ is not
imposed \emph{a priori}, but is shown to be a necessary consequence of the
finite-dimensional composition property
(Definition~\ref{def:fd-density}). This property specifies when a finite
collection of admissible features $f_1,\dots,f_n\in\mathcal{A}(X)$ 
suffices to reduce approximation on $K$ to
approximation on the compact subset $F(K)\subset\mathbb{R}^n$. When $F|_K$
is a topological embedding, Proposition~\ref{prop:strong-fd-composition}
shows that this reduction holds in a strong form, yielding exact
representations $g=u\circ F|_K$ for all $g\in C(K;\mathbb{R}^m)$.

This shifts the focus from the existence of an abstract embedding $\phi$ to
concrete structural properties of the feature family $\mathcal{A}(X)$ and
the topology of $X$. In particular, topological assumptions such as the
Hausdorff property play a natural role: if $X$ is not Hausdorff, a feature
map $F:X\to\mathbb{R}^n$ need not be a topological embedding, and the strong
composition property may fail. Hence, our framework is inherently
topology-sensitive rather than merely compositional.

Moreover, in the compact metric setting we give an explicit realizability
mechanism based on classical dimension theory. Ostrand's extension of the
Kolmogorov superposition theorem yields concrete feature maps for products of
compact metric spaces. This leads to deep narrow universality with width
bounds stated in terms of topological dimension. In this way, geometric and
topological structure translates into width-controlled universality on
non-Euclidean spaces.
\end{remark}

\section{Discussion and Outlook}

This paper develops a general approximation-theoretic framework for neural
networks whose inputs range over arbitrary topological spaces. By formulating
neural architectures in terms of admissible feature families, 
we obtain a unified setting that encompasses classical 
feedforward networks on Euclidean spaces as a special case.

We study the universal approximation property for topological neural networks with vector-valued outputs. In the arbitrary-width case, 
we establish general density results that extend classical
universality theorems to non-Euclidean spaces.

We also analyze deep narrow networks, where the
width of each hidden layer is uniformly bounded while depth is allowed to
grow. We identify conditions under which such networks
retain the universal approximation property, thereby extending deep narrow
universality beyond the Euclidean setting.

A consequence of this analysis is the explicit connection between
universality of deep narrow networks and classical results from topology and
dimension theory. In particular, Ostrand’s 
extension of the Kolmogorov superposition theorem
yields concrete feature maps for products of compact metric spaces.
As a consequence, we obtain explicit width bounds expressed in terms of
topological dimension. This demonstrates how geometric and topological
structure of the input space can be translated into 
approximation properties of the considered networks.

Several directions for further research remain open. One natural extension is
the study of neural networks with outputs in infinite-dimensional spaces.
Another direction is the investigation of quantitative approximation
rates and complexity bounds within the present topological framework. Finally,
it would be of interest to explore how the abstract feature-based approach
developed here interacts with practical applications.

\end{document}